\documentclass[a4paper,10pt]{article}

\usepackage{amssymb} 
\usepackage{amsfonts}
\usepackage{amsmath}
\usepackage{amsthm}
\usepackage[utf8]{inputenc}
\usepackage{epsfig}
\usepackage{epstopdf}
\usepackage{pgf}
\usepackage{graphicx}
\usepackage{tikz}
\usepackage{pdfpages}
\usepackage[all]{xy}
\usetikzlibrary{patterns}
\title{{\bfseries A tropical construction of a family of real reducible curves.}}
\author{\textbf{Arthur Renaudineau}}
\date{}

\newtheorem{theorem}{Theorem}
\newtheorem{lemma}{Lemma}
\newtheorem{remark}{Remark}

\newtheorem{example}{Example}
\newtheorem{definition}{Definition}
\newtheorem*{conjecture}{Conjecture}

\newcommand{\R}{\mathbb{R}}
\newcommand{\RP}{\mathbb{RP}}
\newcommand{\Z}{\mathbb{Z}}
\newcommand{\CP}{\mathbb{CP}}

\newcommand{\C}{\mathbb{C}}
\newcommand{\CC}{\mathbb{C}}
\newcommand{\T}{\mathbb{T}}

\begin{document}
\maketitle


\begin{abstract}
We give a constructive proof using tropical modifications of the existence of a family of real algebraic plane curves with asymptotically maximal numbers of even ovals.
\end{abstract}
\section{Introduction}
Let $A$ be a non-singular real algebraic plane curve in $\CP^2$. Its real part, denoted by $\R A$, is a disjoint union of embedded circles in $\RP^2$. A component of $\R A$ is called an \textit{oval} if it divides $\RP^{2}$ in two connected components. If the degree of the curve $A$ is even, then the real part $\R A$ is a disjoint union of ovals. 
An oval of $\R A$ is called \textit{even} (resp., \textit{odd}) if it is contained inside an even (resp., odd) number of ovals. 
Denote by $p$ (resp., $n$) the number of even (resp., odd) ovals of $\R A$.
\\\\\textbf{Petrovsky inequalities:} For any real algebraic plane curves of degree $2k$, one has
$$
-\frac{3}{2}k(k-1)\leq p-n\leq \frac{3}{2}k(k-1)+1.
$$
One can deduce upper bounds for $p$ and $n$ from Petrovsky inequalities and Harnack theorem (which gives an upper bound for the number of components of a real algebraic curves with respect to its genus):
$$
p\leq \frac{7}{4}k^{2}-\frac{9}{4}k+\frac{3}{2},
$$
and
$$
n\leq \frac{7}{4}k^{2}-\frac{9}{4}k+1.
$$
In 1906, V. Ragsdale formulated the following conjecture.
\begin{conjecture}$($Ragsdale$)$
\\For any real algebraic plane curve of degree $2k$, one has
$$
p\leq \frac{3}{2}k(k-1)+1,
$$
and
$$
n\leq \frac{3}{2}k(k-1).
$$
\end{conjecture}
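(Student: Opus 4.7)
The plan is to attempt Ragsdale's bounds by pushing the classical double-cover argument that yields the Petrovsky inequalities. Fix a defining polynomial $f$ of $A$, normalised so that $f>0$ on the unique depth-$0$ component of $\RP^{2}\setminus\R A$. Let $B^{+}=\{f\geq 0\}$ and $B^{-}=\{f\leq 0\}$; these are compact surfaces with common boundary $\R A$, and $B^{+}$ (resp.\ $B^{-}$) is the union of the closed regions at even (resp.\ odd) depth. Combinatorially, the even ovals appear as boundary components of $B^{-}$ and the odd ovals as boundary components of $B^{+}$, so the nesting data of $\R A$ together with $p$ and $n$ completely determines the topology of $B^{\pm}$.

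Next I would apply Smith theory to the two double covers $Y^{\pm}\to\CP^{2}$ branched along $A$ and defined by $w^{2}=\pm f$. Each $Y^{\pm}$ is smooth and simply connected, with total mod-$2$ Betti number $4+2g$ where $g=(2k-1)(k-1)$ is the genus of $A$. The real locus $\R Y^{\pm}$ is the closed surface obtained by doubling $B^{\pm}$ along $\R A$; its Euler characteristic and number of components are linear expressions in $p$ and $n$, so the Smith inequality $\sum b_{i}(\R Y^{\pm};\Z/2)\leq 4+2g$ becomes a pair of linear constraints. Combined with Harnack's bound $p+n\leq g+1$, these reproduce exactly the $\tfrac{7}{4}k^{2}$-bounds displayed above, but fall short of Ragsdale by a factor $\tfrac{7/4}{3/2}$ in the leading coefficient.

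To close that gap one has to separate the contribution of $p$ from that of $n$ beyond what mod-$2$ Smith theory can see. The natural refinements to try are Arnold's congruence modulo $4$, the Rokhlin--Guillou--Marin formula, and Viro's patchworking spectral sequence abutting to $H_{*}(\C Y^{\pm},\R Y^{\pm};\Z/2)$, each of which is known to sharpen Smith in the presence of specific arrangements of ovals. One would hope to combine them so as to chip the leading coefficient from $\tfrac{7}{4}$ down to $\tfrac{3}{2}$ in each of the two bounds simultaneously, and then to verify the lower-order terms by a case analysis on the nesting tree.

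The main obstacle is exactly this separation, and I expect the plan to fail --- in fact it must fail, because Itenberg's patchworking constructions already produce curves with $p>\tfrac{3}{2}k(k-1)+1$, and with $n>\tfrac{3}{2}k(k-1)$, as soon as $k$ is large enough, so Ragsdale's conjecture as stated is false. What a refinement of the above strategy can at best deliver is a qualitative statement about which nesting patterns can realise the maximum, together with an upper bound having the same leading coefficient $\tfrac{7}{4}$ as the naive one. The correct positive theorem in this direction is therefore the sharp asymptotic $p\sim\tfrac{7}{4}k^{2}$ on the extremal side, whose matching lower-bound construction is precisely what the present paper sets out to provide.
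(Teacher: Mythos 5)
You are right not to produce a proof here, and that is the correct verdict: the statement is labelled a \emph{conjecture} in the paper, it is stated purely as historical context, and the paper contains no proof of it. Indeed it cannot: as the paper itself recalls in the very next paragraph, Itenberg's combinatorial patchworking produces, for every $k\geq 5$, curves of degree $2k$ with $p=\frac{3}{2}k(k-1)+1+\lfloor\frac{(k-3)^2+4}{8}\rfloor$ and curves with $n=\frac{3}{2}k(k-1)+\lfloor\frac{(k-3)^2+4}{8}\rfloor$, both of which violate the Ragsdale bounds. Your diagnosis of why the classical route fails is also essentially accurate: the Smith-theoretic analysis of the double covers $w^{2}=\pm f$ yields the Petrovsky inequalities, and combined with Harnack's bound these give only the $\frac{7}{4}k^{2}-\frac{9}{4}k+O(1)$ upper bounds quoted in the introduction; no known refinement closes the gap to $\frac{3}{2}$, and the counterexamples show none can. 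The one point worth tempering is your closing claim that $p\sim\frac{7}{4}k^{2}$ is the ``sharp asymptotic'': what is known is that the upper bound has leading coefficient $\frac{7}{4}$ and that Brugall\'e's construction (which this paper re-derives constructively via tropical modifications) realises families with $\lim p/k^{2}=\frac{7}{4}$ and $\lim n/k^{2}=\frac{7}{4}$; whether the exact maximum of $p$ or $n$ in each degree equals the Petrovsky--Harnack bound remains open.
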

In 1993, I. Itenberg used combinatorial patchworking to construct, for any $k\geq 5$, a real algebraic plane curve of degree $2k$ with
$$
p= \frac{3}{2}k(k-1)+1+\left\lfloor \dfrac{(k-3)^2+4}{8} \right\rfloor,
$$
and a real algebraic plane curve of degree $2k$ with
$$
n= \frac{3}{2}k(k-1)+\left\lfloor\dfrac{(k-3)^2+4}{8}\right\rfloor,
$$
see \cite{It93}. This construction was improved by B. Haas (see \cite{Haas95}) then by Itenberg (see \cite{It01}) and finally by E. Brugallé (see \cite{Brugalle2006}). Brugallé constructed a family of real algebraic plane curves with
$$
\lim_{k\rightarrow +\infty}\frac{p}{k^2}=\frac{7}{4},
$$
and a family of real algebraic plane curves with
$$
\lim_{k\rightarrow +\infty}\frac{n}{k^2}=\frac{7}{4}.
$$
In order to construct such families, Brugallé proved the existence of a family of real reducible curves $\mathcal{D}_n\cup\mathcal{C}_n$ in $\Sigma_n$, the $n$th Hirzebruch surface.
The curve $\mathcal{D}_n$ has Newton polytope 
$$
\Delta_n=Conv\left((0,0),(n,0),(0,1)\right),
$$ 
the curve $\mathcal{C}_n$ has Newton polytope 
$$
\Theta_n=Conv\left((0,0),(n,0),(0,2),(n,1)\right),
$$ 
and the chart of $\mathcal{D}_n\cup\mathcal{C}_n$ is homeomorphic to the one depicted in Figure \ref{courbebrugallered}.

\begin{figure}[h!]
\centerline{
\includegraphics[scale=0.8]{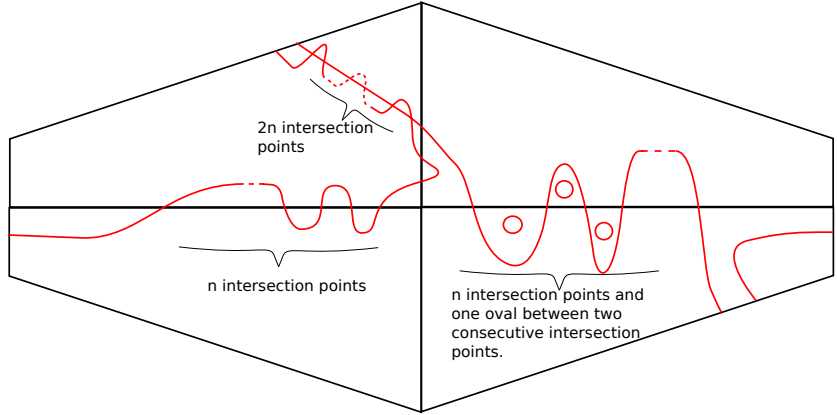}}
\setlength\abovecaptionskip{0cm}
\caption{The chart of $\mathcal{D}_n\cup\mathcal{C}_n$}
\label{courbebrugallered}
\end{figure}

Brugall\'{e}'s construction of this family of real reducible curves used so-called \textit{real rational graphs theoretical method}, based on Riemann existence theorem (see \cite{Brugalle2006} and \cite{Or03}). In particular, this method is not constructive. In this note, we give a constructive method to get such a family using tropical modifications and combinatorial patchworking for complete intersections (see Theorem \ref{patchtropcomp} and \cite{Sturmfels}). In Section \ref{amoebas}, we recall the notion of amoebas, the approximation of tropical hypersurfaces by amoebas and the notion of a real phase on a tropical hypersurface. In Section \ref{modif}, we remind the notion of a tropical modification along a rational function. In Section \ref{strategy} we give our strategy to construct the family $\mathcal{D}_n\cup\mathcal{C}_n$ and in Sections \ref{construction1} and \ref{construction2}, we explain the details of our construction.
\\
\\
\textbf{Acknowledgments.} I am very grateful to Erwan Brugallé and Ilia Itenberg for useful discussions and advisements.

\section{Amoebas and patchworking}
\label{amoebas}
In this section, we present a tropical formulation of the combinatorial patchworking theorem for nonsingular tropical hypersurfaces and complete intersections of nonsingular hypersurfaces. Amoebas appear as a fundamental link between classical algebraic geometry and tropical geometry.
\begin{definition}
Let $V\subset (\CC^{*})^{n}$ be an algebraic variety. Its amoeba (see \cite{GKZ}) is the set $\mathcal{A}=\mathrm{Log}(V)\subset\R^{n}$, where $\mathrm{Log}\left(z_1,\cdots ,z_n\right)=\left(\log \vert z_1\vert ,\cdots,\log \vert z_n \vert\right)$. Similarly, we may consider the map 
$$
\begin{array}{ccccc}
\mathrm{Log}_t & : & \left(\CC^*\right)^n & \to & \R^n\\
 & & (z_1\cdots,z_n) & \mapsto &  \left(\dfrac{\log \vert z_1\vert}{\log t},\cdots , \dfrac{\log \vert z_n\vert}{\log t}\right),
\end{array}
$$
for $t> 1$.
\end{definition}
\begin{definition}
Let $X$ and $Y$ be two non-empty compact subsets of a metric space $(M,\mathrm{d})$. Define their Hausdorff distance $\mathrm{d}_H(X,Y)$ by 
$$
\mathrm{d}_H(X,Y)=\max \left\lbrace \sup_{x\in X} \inf_{y\in Y}\mathrm{d}(x,y), \: \sup_{y\in Y} \inf_{x\in X}\mathrm{d}(x,y)\right\rbrace.
$$ 
\end{definition}
\begin{theorem}(Mikhalkin \cite{Mikpants}, Rullgard \cite{Rul01})
\label{thmepatchtrop}
\\Let $P(x)=``\sum_{i\in\Delta\cap\Z^n} a_ix^i"$ be a tropical polynomial in $n$ variables. Let $$f_t=\sum_{i\in\Delta\cap\Z^n} A_i(t)t^{a_i}z^i$$ be a family of complex polynomials and suppose that $A_i(t)\sim \gamma_i$ when $t$ goes to $+\infty$ with $\gamma_i\in\CC^{*}$. Denote by $Z(f_t)$ the zero-set of $f_t$ in $\left(\CC^*\right)^n$ and by $V(P)$ the tropical hypersurface associated to $P$. Then for any compact $K\subset\R^{n}$,
$$
\lim_{t\rightarrow + \infty}\mathrm{Log}_t(Z(f_t))\cap K=V(P)\cap K,
$$
with respect to the Hausdorff distance. We say that the family $Z(f_t)$ is an approximating family of the hypersurface $V(P)$.
\end{theorem}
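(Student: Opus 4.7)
The plan is to establish the Hausdorff convergence by proving the two symmetric inclusions: first, that any accumulation point of $\mathrm{Log}_t(Z(f_t)) \cap K$ lies in $V(P) \cap K$, and second, that every point of $V(P) \cap K$ is approximated arbitrarily well by points of $\mathrm{Log}_t(Z(f_t)) \cap K$ as $t \to +\infty$. The key observation, used throughout, is that for a point $z \in (\CC^*)^n$ with $\mathrm{Log}_t(z) = x$, the modulus of the monomial $A_i(t) t^{a_i} z^i$ equals $|A_i(t)| \cdot t^{a_i + \langle i, x\rangle}$, so that the tropical evaluation $``P"(x) = \max_i(a_i + \langle i, x\rangle)$ controls the dominant asymptotics of $|f_t(z)|$.

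For the first inclusion, suppose $z_t \in Z(f_t)$ with $\mathrm{Log}_t(z_t) \to x \in K$. If $x \notin V(P)$, then the maximum $M(x) = \max_i(a_i + \langle i, x\rangle)$ is attained by a unique index $i_0$, and there exists $\delta > 0$ such that $a_i + \langle i, x\rangle \leq M(x) - \delta$ for all $i \neq i_0$. Using the continuity of $x \mapsto a_i + \langle i, x\rangle$ and the fact that $A_i(t) \to \gamma_i \neq 0$, one shows that for $t$ large the monomial indexed by $i_0$ strictly dominates the sum of the others, contradicting $f_t(z_t) = 0$. Since this argument is uniform on the compact set $K$, it also shows that $\mathrm{Log}_t(Z(f_t)) \cap K$ is contained in an arbitrarily small neighborhood of $V(P) \cap K$ for $t$ large.

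For the second inclusion, fix $x \in V(P) \cap K$ and let $I = \{i : a_i + \langle i, x\rangle = M(x)\}$, which has at least two elements. Perform the substitution $z = t^x \cdot w$ (coordinate-wise) in $f_t$ and divide by $t^{M(x)}$; this yields a family of polynomials $g_t(w)$ whose coefficients either tend to $\gamma_i \neq 0$ for $i \in I$, or tend to $0$ at a controlled rate for $i \notin I$. Thus $g_t$ converges on compact subsets of $(\CC^*)^n$ to the nontrivial polynomial $g_\infty(w) = \sum_{i \in I} \gamma_i w^i$, which has zeros in $(\CC^*)^n$ since $|I| \geq 2$. A standard perturbation argument (for instance, a Rouché-type bound on an appropriately chosen polydisk around a zero of $g_\infty$, or a direct implicit-function argument after a monomial change of coordinates aligned with the affine span of $I$) produces zeros $w_t$ of $g_t$ converging to such a zero, and translating back gives $z_t \in Z(f_t)$ with $\mathrm{Log}_t(z_t) \to x$.

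The main obstacle is the second inclusion, and specifically the lifting step: one must show that the existence of a zero for the ``leading'' polynomial $g_\infty$ actually forces the existence of nearby zeros of $g_t$ for all large $t$, and then that these zeros are close enough to $w = (1,\ldots,1)$ in the $\mathrm{Log}_t$ sense that the error vanishes. The cleanest treatment handles the case where $I$ spans an affine subspace of positive dimension (so that $g_\infty$ is a monomial times a polynomial in fewer effective variables) by a preliminary monomial change of coordinates, reducing to the case where $g_\infty$ is a genuine polynomial with isolated zeros; then Rouché applies directly. A uniform version of this argument, applied to a finite cover of the compact set $V(P) \cap K$ by neighborhoods adapted to each cell, yields the required uniform Hausdorff approximation.
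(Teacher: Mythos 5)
The paper does not prove this theorem: it is imported verbatim from Mikhalkin and Rullg\aa rd, so there is no internal proof to compare yours against. Judged on its own, your outline is the standard (Kapranov-style) argument and is essentially sound: the first inclusion via domination by the unique maximal monomial off $V(P)$ is correct and routinely made uniform on compacta, and the rescaling $z=t^x w$, normalization by $t^{M(x)}$, and locally uniform convergence $g_t\to g_\infty=\sum_{i\in I}\gamma_i w^i$ with $|I|\geq 2$ is exactly the right mechanism for the second inclusion.

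Two points need repair or at least more care. First, the lifting step as you phrase it is not correct: when the affine span of $I$ has dimension $d\geq 2$, a monomial change of coordinates turns $g_\infty$ into a monomial times a polynomial in $d$ effective variables, whose zero set is still a positive-dimensional hypersurface, so you never reach ``isolated zeros'' and Rouch\'e does not apply directly. The standard fix is to pick a zero $w_0\in(\CC^*)^n$ of $g_\infty$ (which exists since $|I|\geq2$, by restricting to a one-parameter subgroup $u\mapsto u^v$ with $v$ not orthogonal to $I-I$), then restrict both $g_\infty$ and $g_t$ to a generic complex line through $w_0$ on which $g_\infty$ does not vanish identically, and apply Hurwitz's theorem in one variable; this produces $w_t\to w_0$ with $g_t(w_t)=0$, hence $\mathrm{Log}_t(t^xw_t)=x+\mathrm{Log}_t(w_t)\to x$. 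Second, the final compactness argument hides the usual boundary subtlety of such statements: a point $x\in V(P)\cap\partial K$ is approximated by points of $\mathrm{Log}_t(Z(f_t))$ that need not lie in $K$, and conversely $\mathrm{Log}_t(Z(f_t))\cap K$ could a priori be empty while $V(P)\cap K$ is not. One should either work with a slightly enlarged compact set $K'\supset K$ and deduce the claim for $K$, or note that the statement is to be read as local Hausdorff convergence; as literally written, the finite-cover argument you invoke does not by itself close this gap.
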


\begin{remark}
Consider $A_i(t)\in\lbrace\pm 1 \rbrace$, for $i\in\Delta\cap\Z^n$. Then, the polynomial $f_t$ is a Viro polynomial (see for example \cite{It93} for the definition of a Viro polynomial).
\end{remark}
To give a tropical formulation of the combinatorial patchworking theorem for nonsingular tropical hypersurfaces, remind first the notion of a real phase for a nonsingular tropical hypersurface in $\R^{n}$. 
\begin{definition}
\label{realphasegeneral}
A real phase on a nonsingular tropical hypersurface $S$ in $\R^n$ is the data for every facet $F$ of $S$ of $2^{n-1}$ $n$-uplet of signs $\varphi_{F,i}=(\varphi_{F,i}^{1},\cdots ,\varphi_{F,i}^{n})$, $1\leq i\leq 2^{n-1}$ satisfying to the following properties:
\begin{enumerate}
\item If $1\leq i\leq 2^{n-1}$ and $v=(v_1,\cdots ,v_n)$ is an integer vector in the direction of $F$, then there exists $1\leq j\leq 2^{n-1}$ such that $(-1)^{v_k}\varphi_{F,i}^{k}=\varphi_{F,j}^{k}$, for $1\leq k\leq n$.
\item Let $H$ be a codimension $1$ face of $S$. Then for any facet $F$ adjacent to $H$ and any $1\leq i\leq 2^{n-1}$, there exists a unique face $G\neq F$ adjacent to $H$ and $1\leq j\leq 2^{n-1}$ such that $\varphi_{G,j}=\varphi_{F,i}$.
\end{enumerate}
A nonsingular tropical hypersurface equipped with a real phase is called a nonsingular real tropical hypersurface.
\end{definition}
\begin{example}
\label{extropline}
In Figure \ref{tropline}, we depicted a real tropical line.
\begin{figure}
 \begin{minipage}[l]{.46\linewidth}
  \centering\epsfig{figure=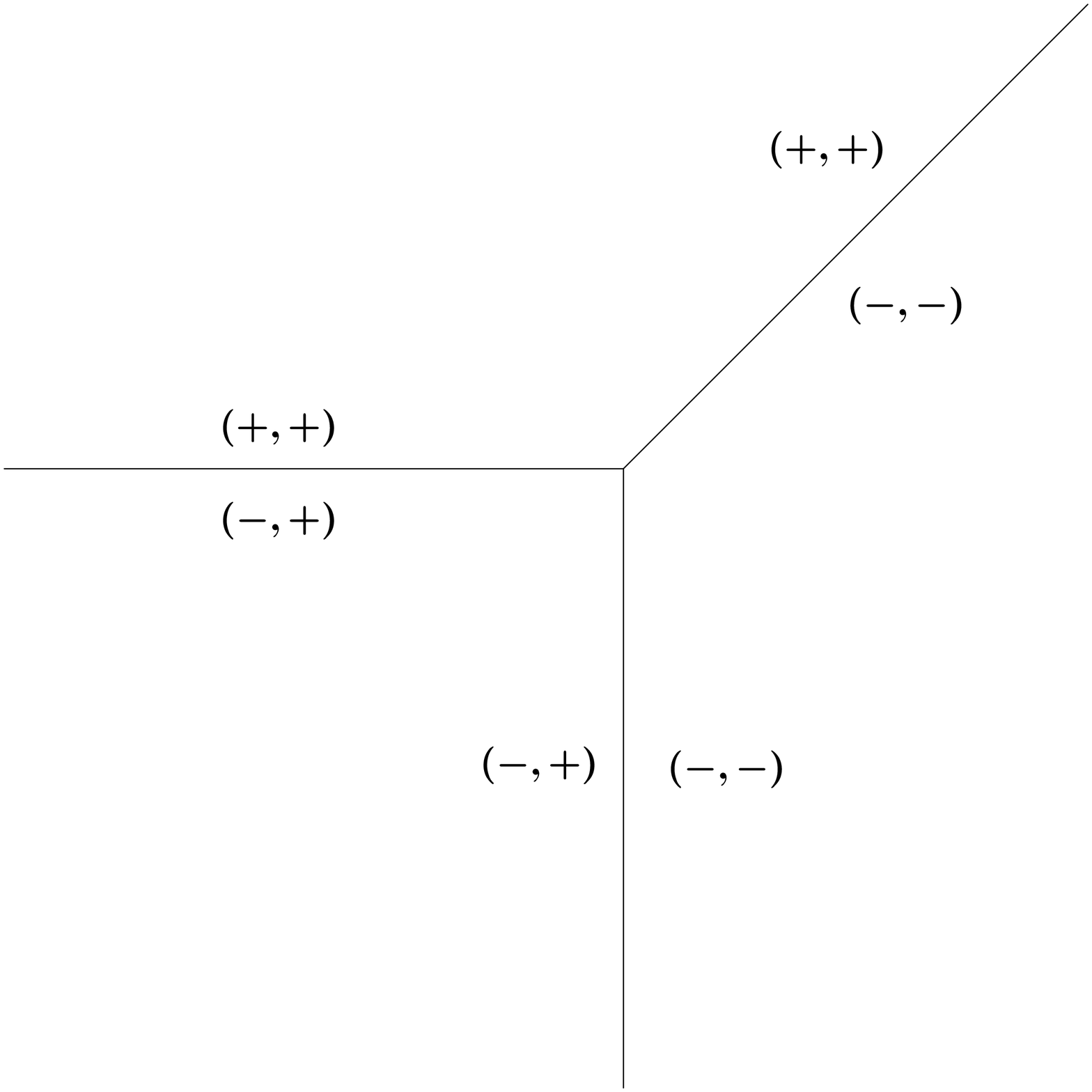,width=16cm}
  \caption{A real tropical line. \label{tropline}}
 \end{minipage} \hfill
 \begin{minipage}[l]{.46\linewidth}
  \centering\epsfig{figure=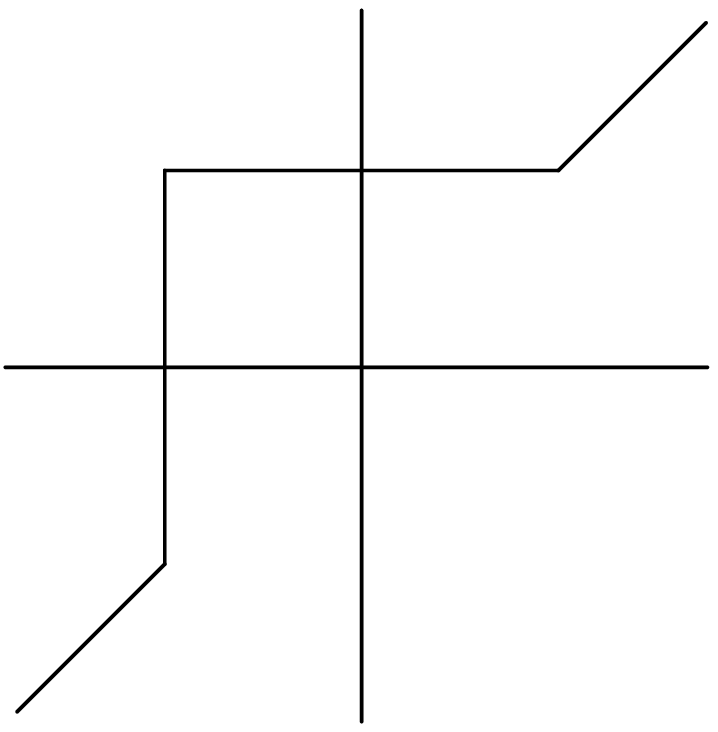,width=16cm}
  \caption{The real part of the real tropical line depicted in Figure \ref{tropline}. \label{realtropline}}
 \end{minipage}
\end{figure}
\end{example}

\begin{remark}
In the case of nonsingular tropical curves in $\R^2$, a real phase can also be described in terms of a ribbon structure (see \cite{BruItMiShaw}).
\end{remark}
\begin{definition}
 For any rational convex polyhedron $F$ in $\R^n$ defined by $N$ inequalities 
$$
<j_1,x>\leq c_1, \cdots , <j_N,x>\leq c_N,
$$ 
where $j_1\cdots j_N \in\Z^N$ and $c_1,\cdots, c_N\in\R$, denote by 
$F^{\exp}$ the rational convex polyhedron in $(\R^{*}_+)^{n}$ defined by the inequalities
$$
<j_1,x>\leq \exp(c_1), \cdots , <j_N,x>\leq \exp(c_N).
$$ 
Reciprocally for any rational convex polyhedron $F$ in $(\R^{*}_+)^{n}$ defined by the inequalities  
$$
<k_1,x>\leq d_1, \cdots , <k_N,x>\leq d_N,
$$ 
where $k_1\cdots k_N \in\Z^N$ and $d_1,\cdots, d_N\in\R^{*}_+$, denote by $F^{\log}$ the rational convex polyhedron in $\R^n$ defined by the inequalities
$$
<k_1,x>\leq \log(d_1), \cdots , <k_N,x>\leq \log(d_N).
$$
Extend these definitions to rational polyhedral complexes.
\end{definition}

For $\varepsilon=(\varepsilon_1,\cdots ,\varepsilon_n)\in(\Z/2\Z)^n$, denote by $s_\varepsilon$ the symmetry of $\R^n$ defined by
$$
s_\varepsilon(u_1,\cdots ,u_n)=((-1)^{\varepsilon_1}u_1,\cdots ,(-1)^{\varepsilon_n}u_n).
$$
Let $(S,\varphi)$ be a nonsingular real tropical hypersurface. Denote by $\mathcal{F}(S)$ the set of all facets of $S$.
\begin{definition}
The real part of $(S,\varphi)$ is
$$
\R S_\varphi=\bigcup_{F\in\mathcal{F}(S)}\bigcup_{1\leq i\leq 2^{n-1}} s_{\varphi_{F,i}}\left(F^{\exp}\right).
$$
\end{definition}
\begin{example}
In Figure \ref{realtropline}, we depicted the real part of the real tropical line from Example \ref{extropline}.

\end{example}
Let $S$ be a nonsingular tropical hypersurface in $\R^n$ given by a tropical polynomial $P$, and let $\varphi$ be a real structure on $S$. Denote by $\tau$ the dual subdivision of $S$. 
\begin{definition}
\label{compatibility}
A distribution of signs $\delta$ at the vertices of $\tau$ is called compatible with $\varphi$ if for any vertex $v$ of $\tau$, the following compatibility condition is satisfied. 
\begin{itemize}
\item For any vertex $w$ of $\tau$ adjacent to $v$, one has $\delta_v\neq\delta_w$ if and only if there exists $1\leq i\leq 2^{n-1}$ such that $\varphi_{F,i}=(+,\cdots ,+)$, where $F$ denotes the facet of $S$ dual to the edge connecting $v$ and $w$.
\end{itemize} 
\end{definition}
\begin{lemma}
\label{lemmarealphase}
For any real phase $\varphi$ on $S$, there exist exactly two distributions of signs at the vertices of $\tau$ compatible with $\varphi$. Reciprocally, given any distribution of signs $\delta$ at the vertices of $\tau$, there exists a unique real phase $\varphi$ on $S$ such that $\delta$ is compatible with $\varphi$.
\end{lemma}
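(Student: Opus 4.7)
The plan is to reformulate the compatibility condition of Definition \ref{compatibility} in terms of characters of $\{\pm 1\}^n$. For each edge $e=[v,w]$ of $\tau$ with primitive direction $u_e=w-v$, property (1) of a real phase implies that the set $\{\varphi_{F,i}\}_i$ is stable under the component-wise action $u\mapsto((-1)^{u_1},\cdots,(-1)^{u_n})$ of the sublattice $\Z^n\cap u_e^{\perp}$. Since $u_e$ is primitive, the image of this sublattice modulo $2$ coincides with the index-$2$ subgroup $\ker(\chi_e)\subset\{\pm 1\}^n$, where $\chi_e(\sigma)=\sigma^{u_e}$. Hence $\{\varphi_{F,i}\}_i$ is one of the two cosets of $\ker(\chi_e)$; setting $c_e:=\chi_e(\varphi_{F,i})\in\{\pm 1\}$ (well-defined by (1)), the compatibility condition of Definition \ref{compatibility} amounts exactly to $c_e=-\delta_v\delta_w$.

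For the reciprocal direction, given $\delta$, the equation $c_e=-\delta_v\delta_w$ selects the coset $\{\varphi_{F,i}\}_i$ uniquely for every facet $F$, and property (1) is automatic. To verify property (2), fix a codimension-$1$ face $H$ of $S$, dual to a $2$-simplex $[v_1,v_2,v_3]$ of $\tau$, and let $F_{ij}$ denote the facet dual to the edge $[v_i,v_j]$, with associated value $c_{ij}$. A short case analysis over the four possible pairs $(\chi_{e_{12}}(\sigma),\chi_{e_{13}}(\sigma))\in\{\pm 1\}^2$, using the identity $\chi_{e_{23}}(\sigma)=\chi_{e_{12}}(\sigma)\chi_{e_{13}}(\sigma)$, shows that the number of facets among $F_{12},F_{13},F_{23}$ whose sign coset contains a given $\sigma\in\{\pm 1\}^n$ belongs to $\{0,2\}$ for every $\sigma$ if and only if $c_{12}c_{13}c_{23}=-1$; this is equivalent to property (2). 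Substituting $c_{ij}=-\delta_{v_i}\delta_{v_j}$ gives $c_{12}c_{13}c_{23}=-(\delta_{v_1}\delta_{v_2}\delta_{v_3})^2=-1$, so property (2) holds and $\varphi$ is the unique real phase compatible with $\delta$.

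For the forward direction, given $\varphi$, the equation $\delta_v\delta_w=-c_e$ prescribes along each edge $[v,w]$ of $\tau$ whether $\delta$ takes equal or opposite values at its endpoints. A $\pm 1$-valued function on the vertices realizing this data exists if and only if the product of the prescribed ratios around every $1$-cycle in the $1$-skeleton of $\tau$ equals $+1$. Since $\tau$ triangulates the (contractible) Newton polytope, $H_1(\tau;\Z/2)=0$, every such cycle is the boundary of a $\Z/2$-chain of $2$-simplices, and it suffices to verify the cocycle identity $c_{12}c_{13}c_{23}=-1$ on each $2$-face $[v_1,v_2,v_3]$ of $\tau$. This identity is precisely what property (2) of the real phase guarantees, by the case analysis above. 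Consistency then determines $\delta$ up to a global sign change, producing exactly two compatible distributions. The main technical point is the enumeration linking property (2) to the triangle cocycle identity $c_{12}c_{13}c_{23}=-1$; the rest of the argument is formal.
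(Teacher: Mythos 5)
Your proof is correct and follows essentially the same route as the paper: consistency of the sign propagation reduces to a parity condition around each codimension-one face of $S$, which is exactly property (2) of a real phase, and global consistency follows from the simple connectivity of the triangulated polytope. Your character-theoretic packaging (the invariant $c_e=\chi_e(\varphi_{F,i})$ and the triangle identity $c_{12}c_{13}c_{23}=-1$) is a clean reformulation of the paper's observation that either zero or exactly two of the three facets adjacent to a codimension-one face carry the phase $(+,\cdots,+)$, with the added benefit of spelling out the reciprocal direction (existence and uniqueness of $\varphi$ given $\delta$), which the paper's proof leaves essentially implicit.
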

\begin{proof}
Let $\varphi$ be a real phase on $S$. Choose an arbitrary vertex $v$ of $\tau$ and put an arbitrary sign $\varepsilon$ at $v$. Given a vertex of $\tau$ equipped with a sign, define a sign at all adjacent vertices by using the compatibility condition in Definition \ref{compatibility}. It gives a distribution of signs $\delta$ at the vertices of $\tau$ compatible with $\varphi$ such that $\delta_v=\varepsilon$. In fact, let $G$ be a face of $S$ of codimension $1$ and denote by $F_1,F_2,F_3$ the facets of $S$ adjacents to $G$. It follows from the definition of a real phase that either $\varphi_{F_i,k}\neq (+,\cdots ,+)$ for all $1\leq i\leq 3$ and $1\leq k \leq 2^{n-1}$, or that there exist exactly two indices $1\leq i,j\leq 3$ and such that 
$$
\varphi_{F_i,k_i}=\varphi_{F_j,k_j}=(+,\cdots ,+),
$$
where $k_i\in\lbrace 1, \cdots, 2^{n-1}\rbrace$ and $k_j\in\lbrace 1, \cdots, 2^{n-1}\rbrace$. This means exactly that going over any cycle $\Gamma$ made of edges of $\tau$, the signs at the vertices of $\Gamma$ change an even number of times, and the distribution of signs $\delta$ is well defined. The other distribution of signs at the vertices of $\tau$ compatible with $\varphi$ is the distribution $\delta'$ defined by $\delta'(v)=-\delta(v)$, for all vertices $v$ of $\tau$.
\end{proof}
\begin{definition}
\label{Harnackphase}
Let $\Delta$ be a $2$-dimensional polytope in $\R_+^2$ and let $\tau$ be a primitive triangulation of $\Delta$. The \textit{Harnack distribution of signs} at the vertices of $\tau$ is defined as follows.
If $v$ is a vertex of $\tau$ with both coordinates even, put $\delta_v=-$, otherwise put $\delta_v=+$. The real phase compatible with $\delta$ is called the \textit{Harnack phase}. A $T$-curve associated to any primitive triangulation with a Harnack distribution of signs is a so-called simple Harnack curve (see for example \cite{It93} for the notion of a $T$-curve). Simple Harnack curves have some very particular properties (see \cite{Mik00}). 

\end{definition}

For any nonsingular real tropical hypersurface $(S,\varphi)$ and any $\varepsilon\in(\Z/2\Z)^n$, put
$$
\R S_\varphi^\varepsilon=s_\varepsilon\left( \R S_\varphi \cap s_\varepsilon\left((\R_+^*)^n\right)\right).
$$
The set $\R S_\varphi^\varepsilon$ is a finite rational polyhedral complex in $(\R_+^*)^n$.
The following theorem is a corollary of Theorem \ref{thmepatchtrop}.
\begin{theorem}
\label{tropreal}
Let $S$ be a nonsingular tropical hypersurface given by the tropical polynomial $P(x)=``\sum_{i\in\Delta\cap\Z^{n}}a_ix^i"$, and let $\tau$ be the dual subdivision of $P$. Let $\varphi$ be a real phase on $S$ and let $\delta$ be a distribution of signs at the vertices of $\tau$ compatible to $\delta$. Put $f_t=\sum_{i\in\Delta\cap\Z^{n}}\delta_it^{a_i}z^i$. Then, for every $\varepsilon\in(\Z/2\Z)^n$ and for every compact $K\subset\R^n$, one has
$$
\lim_{t\rightarrow +\infty}\mathrm{Log}_t\left(Z(f_t)\cap s_\varepsilon\left((\R_+^*)^n\right)\right)\cap K=\left(\R S_\varphi^\varepsilon\right)^{\log} \cap K.
$$
We say that the family $Z(f_t)$ is an approximating family of $(S,\varphi)$.
\end{theorem}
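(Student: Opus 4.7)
The plan is to reduce the statement, via the $(\Z/2\Z)^n$-action on $(\CC^*)^n$, to the case of the positive real quadrant, apply Theorem \ref{thmepatchtrop} to the complex amoeba, and then identify the real positive contribution through a local two-monomial analysis. The symmetry $s_\varepsilon$ is a homeomorphism of $(\CC^*)^n$ which commutes with $\mathrm{Log}_t$ and sends $(\R_+^*)^n$ onto $s_\varepsilon((\R_+^*)^n)$. Pulling back $f_t$ by $s_\varepsilon$ gives
$$
f_t\circ s_\varepsilon(z)=\sum_i \delta_i^\varepsilon t^{a_i}z^i,\qquad \delta_i^\varepsilon:=\delta_i(-1)^{\langle\varepsilon,i\rangle},
$$
so that $\mathrm{Log}_t(Z(f_t)\cap s_\varepsilon((\R_+^*)^n))=\mathrm{Log}_t(Z(f_t\circ s_\varepsilon)\cap(\R_+^*)^n)$, and it suffices to treat the case of the positive quadrant for the modified polynomial.

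Next I would invoke Theorem \ref{thmepatchtrop} on $f_t\circ s_\varepsilon$: since $\delta_i^\varepsilon\in\CC^*$, its hypotheses are met and $\mathrm{Log}_t(Z(f_t\circ s_\varepsilon))$ converges to $V(P)=S$ in Hausdorff distance on any compact $K$. To detect the real positive part, I would pick a neighborhood $U_F$ of the relative interior of each facet $F$ of $S$ in which $f_t\circ s_\varepsilon$ is dominated by the two monomials indexed by the endpoints $v,w$ of the edge of $\tau$ dual to $F$. A standard dominant-term estimate then shows that, for $t$ large, $U_F$ meets $Z(f_t\circ s_\varepsilon)\cap(\R_+^*)^n$ if and only if the two signs $\delta_v^\varepsilon$ and $\delta_w^\varepsilon$ differ, and in that case the real positive locus Hausdorff-converges (after $\mathrm{Log}_t$) to $F\cap U_F$. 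Combined with uniform control near the codimension-two skeleton of $S$, this yields that $\mathrm{Log}_t(Z(f_t\circ s_\varepsilon)\cap(\R_+^*)^n)\cap K$ converges to the union of those facets $F$ for which $\delta_v^\varepsilon\neq\delta_w^\varepsilon$, intersected with $K$.

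It remains to identify this union with $(\R S_\varphi^\varepsilon)^{\log}$. By axiom~(1) of Definition \ref{realphasegeneral}, the set $\{\varphi_{F,i}\}_i$ is a coset of the subgroup $H_F\subset(\Z/2\Z)^n$ generated by the integer directions of $F$; since $S$ is nonsingular, $[v,w]$ is a primitive edge and $H_F=\{\eta\in(\Z/2\Z)^n:\langle\eta,w-v\rangle\equiv 0\bmod 2\}$. The compatibility condition of Definition \ref{compatibility} determines which of the two cosets is $\{\varphi_{F,i}\}_i$ in terms of whether $\delta_v\neq\delta_w$, and a direct unwinding using $\delta_i^\varepsilon=\delta_i(-1)^{\langle\varepsilon,i\rangle}$ then shows $\delta_v^\varepsilon\neq\delta_w^\varepsilon$ precisely when $\varepsilon\in\{\varphi_{F,i}\}_i$, i.e.\ precisely when $F$ enters the definition of $\R S_\varphi^\varepsilon$. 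The main obstacle in this scheme is the dominant-term argument near the codimension-two skeleton of $S$: while the two-monomial picture on the relative interior of a facet is elementary, propagating it uniformly across the skeleton so as to obtain Hausdorff convergence on all of $K$ requires either a stratified facet-by-facet estimate, or the observation that the candidate limit is closed in $S$ and that Theorem \ref{thmepatchtrop} already provides Hausdorff convergence of the ambient complex amoeba to $S$.
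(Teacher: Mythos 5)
The paper gives no argument for this theorem at all: it simply states that the result ``is a corollary of Theorem \ref{thmepatchtrop}'' and moves on. Your proposal therefore does not so much diverge from the paper's proof as supply one, and its architecture is the standard and correct one: the $s_\varepsilon$-twist $\delta_i^\varepsilon=\delta_i(-1)^{\langle\varepsilon,i\rangle}$ reducing to the positive orthant, the two-monomial sign test on the relative interior of each facet, and the identification of $\{F:\delta_v^\varepsilon\neq\delta_w^\varepsilon\}$ with $\{F:\varepsilon\in\{\varphi_{F,i}\}_i\}$ via the coset description of the phase (your case analysis on whether $\langle\varepsilon,w-v\rangle$ is even, combined with Definition \ref{compatibility}, is exactly right, and it recovers $(\R S_\varphi^\varepsilon)^{\log}\cap K=\bigcup_{F:\varepsilon\in\{\varphi_{F,i}\}}F\cap K$). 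The one place where you should be careful is the step you yourself flag: of your two proposed remedies near the codimension-two skeleton, only the first actually works. Closedness of the candidate limit together with Hausdorff convergence of the \emph{complex} amoeba to $S$ gives the $\liminf$ inclusion (real zeros exist arbitrarily close to every point of each selected closed facet, since they accumulate on its dense relative interior) but not the $\limsup$ inclusion: it does not exclude real positive zeros accumulating on a point of a non-selected facet near its boundary, or on a codimension-one face of $S$ none of whose adjacent facets is selected. For that you need the local analysis at each face $G$ of $S$: the truncation of $f_t\circ s_\varepsilon$ to the dual cell of $G$, which is a unimodular simplex since $S$ is nonsingular, becomes after a monomial change of coordinates an affine-linear polynomial, whose positive zero locus has $\mathrm{Log}_t$-image converging to the union of exactly those cones of the star of $G$ dual to edges with a sign change; axiom (2) of Definition \ref{realphasegeneral} is precisely what guarantees this matches the selected facets around $G$. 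With that stratified argument in place the proof is complete.
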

\begin{definition}
\label{intersecttransvers}
Let $S_1\cdots ,S_k$ be $k$ tropical varieties in $\R^{n}$. We say that the varieties $S_i$, $1\leq i\leq k$, intersect transversely if every top-dimensional cell of $S_1\cap\cdots\cap S_k$ is a transverse intersection $\cap_{i=1}^{k} F_i$, where $F_i$ is a facet of $S_i$.
\end{definition}
The next theorem is a tropical reformulation of the combinatorial patchworking theorem for complete intersections (see \cite{Sturmfels}).
\begin{theorem}
\label{patchtropcomp}
Let $S_1,\cdots ,S_k$ be $k$ tropical hypersurfaces in $\R^n$ such that $S_j$ is given by the tropical polynomial $P^j(x)=``\sum_{i\in\Delta_j\cap\Z^{n}}a_i^jx^i"$, for $1\leq j\leq k$. Let $\tau^j$ be the dual subdivision of $P^j$, for $1\leq j\leq k$. Assume that the $S_1,\cdots ,S_k$ intersect transversely. Let $\varphi^{j}$ be a real phase on $S_j$ and $\delta^j$ be a distribution of signs at the vertices of $\tau^j$ compatible to $\delta^j$, for $1\leq j\leq k$. Put $f_t^j=\sum_{i\in\Delta_j\cap\Z^{n}}\delta_i^{j} t^{-a_i^j}z^i$. Then for every $\varepsilon\in(\Z/2\Z)^n$ and for every compact $K\subset \R^n$, one has
\begingroup\small
$$
\lim_{t\rightarrow +\infty}\mathrm{Log}_t\left(Z(f_t^1)\cap\cdots\cap Z(f_t^k)\cap s_\varepsilon\left((\R_+^*)^n\right)\right)\cap K=\left(\R S^{\varepsilon}_{1,\varphi^{1}}\right)^{\log}\cap \cdots\cap \left(\R S^{\varepsilon}_{k,\varphi^{k}}\right)^{\log}\cap K.
$$
\endgroup
We say that the family $\left(Z(f_t^1),\cdots,Z(f_t^k)\right)$ is an approximating family of 
$$
\left((S_1,\cdots,S_k),(\varphi^1,\cdots,\varphi^k)\right).
$$
\end{theorem}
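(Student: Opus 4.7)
The plan is to reduce the complete-intersection statement to the hypersurface case already proved in Theorem \ref{tropreal}, with transversality playing the role of the new hypothesis that makes Hausdorff limits commute with intersection. There are two inclusions to establish, and the substantial work lies in only one of them.

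First I would apply Theorem \ref{tropreal} to each family $Z(f_t^j)$ separately (after the harmless change of variable $t \mapsto 1/t$ to account for the $t^{-a_i^j}$ vs.\ $t^{a_i^j}$ convention). This gives, for every compact $K' \subset \R^n$ and every $\varepsilon$,
\[
\lim_{t\to\infty} \mathrm{Log}_t\bigl(Z(f_t^j) \cap s_\varepsilon((\R_+^*)^n)\bigr) \cap K' = \bigl(\R S_{j,\varphi^j}^{\varepsilon}\bigr)^{\log} \cap K'
\]
in the Hausdorff metric. The ``easy'' inclusion is then a soft general fact about Hausdorff convergence: any limit of a sequence of points belonging simultaneously to all $k$ amoebas must lie in every limit set, so
\[
\limsup_{t\to\infty} \mathrm{Log}_t\Bigl(\bigcap_j Z(f_t^j) \cap s_\varepsilon((\R_+^*)^n)\Bigr) \cap K \subset \bigcap_j \bigl(\R S_{j,\varphi^j}^{\varepsilon}\bigr)^{\log} \cap K.
\]

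The main step, and the principal obstacle, is the reverse inclusion: exhibiting for every $x \in \bigcap_j (\R S_{j,\varphi^j}^{\varepsilon})^{\log} \cap K$ an actual sequence of intersection points $y_t \in \bigcap_j Z(f_t^j) \cap s_\varepsilon((\R_+^*)^n)$ with $\mathrm{Log}_t(y_t) \to x$. This is where Definition \ref{intersecttransvers} is essential. By transversality at $x^{\exp}$, the facets $F_j$ of $S_j$ through the corresponding tropical point meet in a cell whose normal directions span a rank-$k$ sublattice; in particular $k \leq n$ and the tangent cones are linearly independent. The local picture of each amoeba $\mathrm{Log}_t(Z(f_t^j))$ near $x$ is, after rescaling, a small thickening of the facet $F_j$ converging to $F_j$ itself, so the tangent spaces of the $k$ amoeba sheets converge to the tangent spaces of the $F_j$'s. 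Hence for $t$ large enough these $k$ smooth real hypersurfaces in $s_\varepsilon((\R_+^*)^n)$ remain in transverse position near $x^{\exp}$, and the implicit function theorem produces a common zero $y_t$ depending continuously on $t$ with $\mathrm{Log}_t(y_t) \to x$.

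Finally, to upgrade pointwise approximation to Hausdorff convergence on the compact $K$, I would use compactness of $\bigcap_j (\R S_{j,\varphi^j}^{\varepsilon})^{\log} \cap K$ together with a uniform version of the implicit function argument: the transversality condition is open and the constants controlling the implicit function theorem can be chosen uniformly on any compact subset of the transverse locus. Combining the two inclusions yields the claimed equality. The delicate point throughout is the transition from transversality of piecewise-linear tropical facets to transversality of the smooth complex hypersurfaces approximating them; once the amoeba tangent spaces are shown to converge to the tropical tangent spaces, the argument becomes a standard transversality-plus-implicit-function-theorem routine.
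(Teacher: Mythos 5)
The paper does not actually prove Theorem \ref{patchtropcomp}: it is stated as a reformulation of the combinatorial patchworking theorem for complete intersections and attributed to \cite{Sturmfels}, so there is no in-paper argument to compare yours against. Judged on its own, your reduction to the hypersurface case has the right architecture (easy inclusion from general properties of Hausdorff limits, hard inclusion from transversality plus the implicit function theorem, then a uniformity argument over the compact $K$), but it contains a genuine gap exactly at the point you yourself flag as ``delicate.''

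The gap is this: Theorem \ref{tropreal} is a purely $C^0$ statement --- Hausdorff convergence of the sets $\mathrm{Log}_t(Z(f_t^j)\cap s_\varepsilon((\R_+^*)^n))\cap K$ to $(\R S_{j,\varphi^j}^{\varepsilon})^{\log}\cap K$. Hausdorff convergence of sets gives no control whatsoever on tangent spaces (a graph oscillating with small amplitude and high frequency converges in Hausdorff distance to a hyperplane while its tangent spaces do not converge), so the assertion that ``the tangent spaces of the $k$ amoeba sheets converge to the tangent spaces of the $F_j$'s'' cannot be deduced from Theorem \ref{tropreal} as a black box. Without that $C^1$ control, transversality of the $k$ real sheets near $x^{\exp}$ does not follow, and the implicit function theorem cannot be invoked to produce the common zeros $y_t$. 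To close the gap you must return to the explicit Viro polynomials $f_t^j$ and carry out the standard local analysis near a point in the relative interior of a facet $F_j$: the two monomials indexed by the endpoints of the dual edge of $\tau^j$ dominate all others for $t$ large, the real zero set in the relevant orthant is locally the graph of a function over the hyperplane containing $F_j$, and this graph converges to $F_j$ in the $C^1$ topology after the $\mathrm{Log}_t$ rescaling, uniformly on compact subsets of the relative interior. That estimate (together with a separate treatment of the lower-dimensional cells of $S_1\cap\cdots\cap S_k$, which lie only in the closure of the transverse top-dimensional cells) is the real content of the theorem; once it is in place, your transversality-plus-implicit-function-theorem routine and the uniform compactness argument do finish the proof. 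A secondary, minor point: the substitution $t\mapsto 1/t$ you propose to reconcile the $t^{a_i}$ and $t^{-a_i^j}$ conventions also reverses the sign of $\mathrm{Log}_t$, so it is not quite harmless as stated; this is really an inconsistency of conventions in the paper rather than an error of yours, but it should be handled explicitly.
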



\section{Tropical modifications of $\R^{n}$}
\label{modif}
Tropical modifications were introduced by Mikhalkin in \cite{Mikhalkin}. We recall in this section the definition of a tropical modification of $\R^n$ along a rational function. More details can be found in \cite{Mikhalkin}, \cite{Bru_Medr}, \cite{Shawthese} and \cite{BruItMiShaw}. 
Let $f:\R^{n}\rightarrow \R$ and $g:\R^{n}\rightarrow \R$ be two tropical polynomials. One may consider the rational tropical function $h=``\dfrac{f}{g}"$. Denote by $V(f)$ the tropical hypersurface associated to $f$ and by $V(g)$ the tropical hypersurface associated to $g$.
\begin{definition}
The tropical modification of $\R^{n}$ along $h$, denoted by $\R^n_h$, is the tropical hypersurface of $\R^{n+1}$ defined by 
$``x_{n+1}g(x)+f(x)"$.
\end{definition}
We may also describe $\R^n_h$ in a more geometrical way. Consider the graph $\Gamma_h$ of the piecewise linear function $h$. It is a polyhedral complex in $\R^{n+1}$. Equip the graph $\Gamma_h$ with the constant weight function equal to $1$ on it facets. In general, this graph is not a tropical hypersurface of $\R^{n+1}$ as it is not balanced at faces $F$ of codimension one. At every codimension one face $F$ of $\Gamma_h$ which fails to satisfy the balancing condition, add a new facet as follows. Denote by $w$ the integer number such that the balancing condition is satisfied at $F$ if we attach to $F$ a facet $F^{-1}$ in the $(0,\cdots,0,-1)$-direction equipped with the weight $w$. If $w>0$, attach $F^{-1}$ (equipped with $w$) to $F$ and if $w<0$, attach to $F$ a facet $F^{+1}$ in the $(0,\cdots,0,1)$-direction equipped with the weight $-w$. If $V(f)$ and $V(g)$ intersect transversely, then the tropical modification $\R^n_h$ is obtained by attaching to the graph $\Gamma_h$ the intervals $$\left](x,-\infty),(x,h(x))\right]$$ for all $x$ in the hypersurface $V(f)$, and $$\left[(x,h(x)),(x,+\infty)\right[$$ for all $x$ in the hypersurface $V(g)$ and by equipping each new facet with the unique weight so that the balancing condition is satisfied. 

\begin{definition}
The principal contraction
$$
\delta_h:\R^{n}_h\rightarrow \R^{n}
$$
associated to $h$ is the projection of $\R^{n}_h$ onto $\R^{n}$.
\end{definition}
The principal contraction $\delta_h$ is one-to-one over $\R^n\setminus \left(V(f)\cup V(g)\right)$. 
\begin{example}
In Figure \ref{tropplane}, we depicted the tropical modification of $\R^2$ along the tropical line given by the tropical polynomial $``x+y+0"$. It is the tropical plane in $\R^3$ given by the tropical polynomial $``x+y+z+0"$.
\begin{figure}[h!]
\centerline{
\includegraphics[scale=0.05]{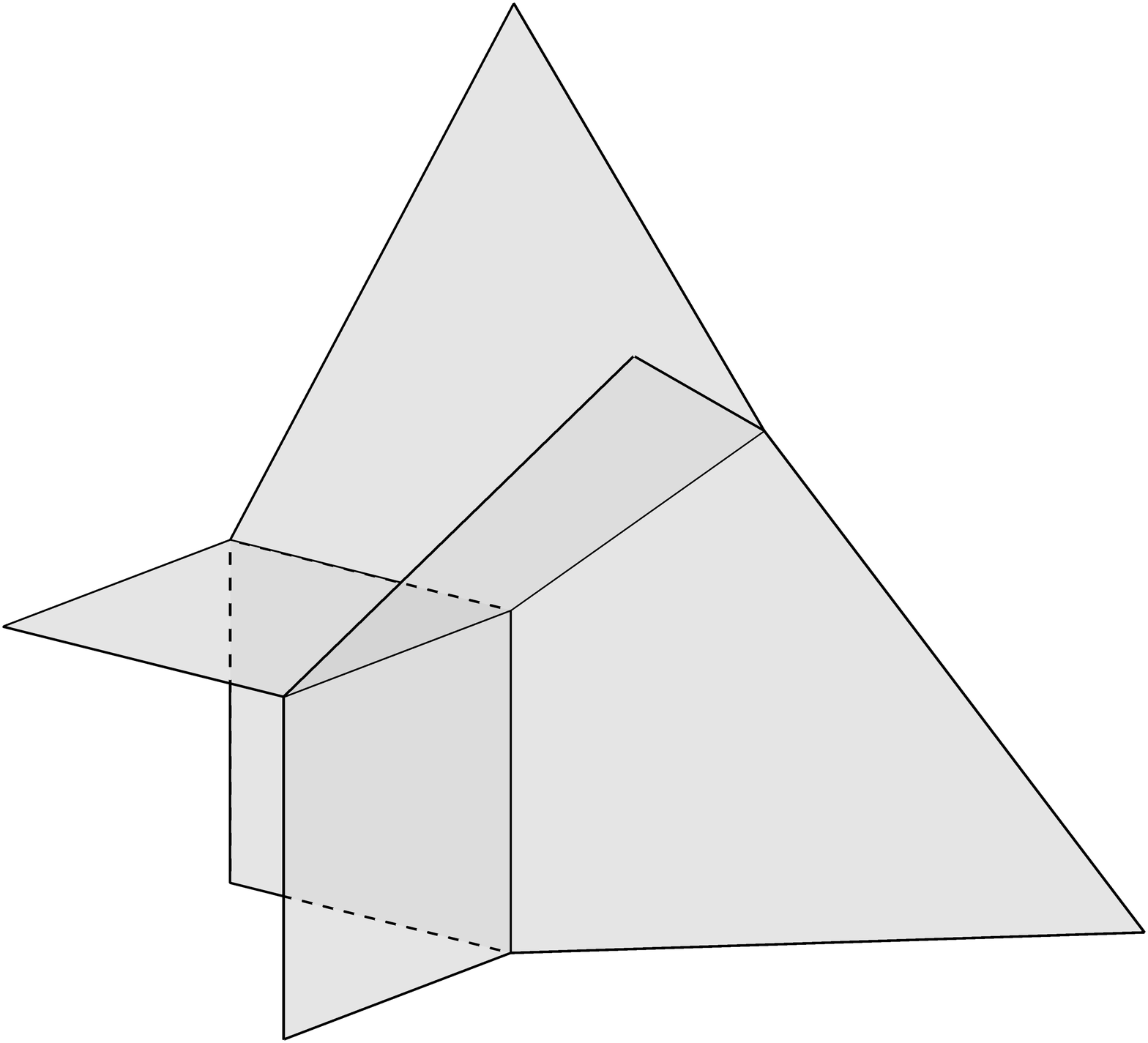}}
\setlength\abovecaptionskip{0cm}
\caption{The tropical modificiation of $\R^2$ along the tropical line $``x+y+0"$.}
\label{tropplane}
\end{figure} 
In Figure \ref{tropmodif}, we depicted the tropical modification of $\R^2$ along $``\frac{P}{Q}"$, where $``P=x+y+0"$ and $``Q=y+(-1)"$. The tropical curves $V(P)$ and $V(Q)$ intersect transversely. In Figure \ref{tropmodif2}, we depicted the tropical modification of $\R^2$ along $``\frac{P}{Q_1}"$, where $``Q_1=y+0"$. The tropical curves $V(P)$ and $V(Q)$ do not intersect transversely.
\begin{figure}[h!]
\centerline{
\includegraphics[scale=0.16]{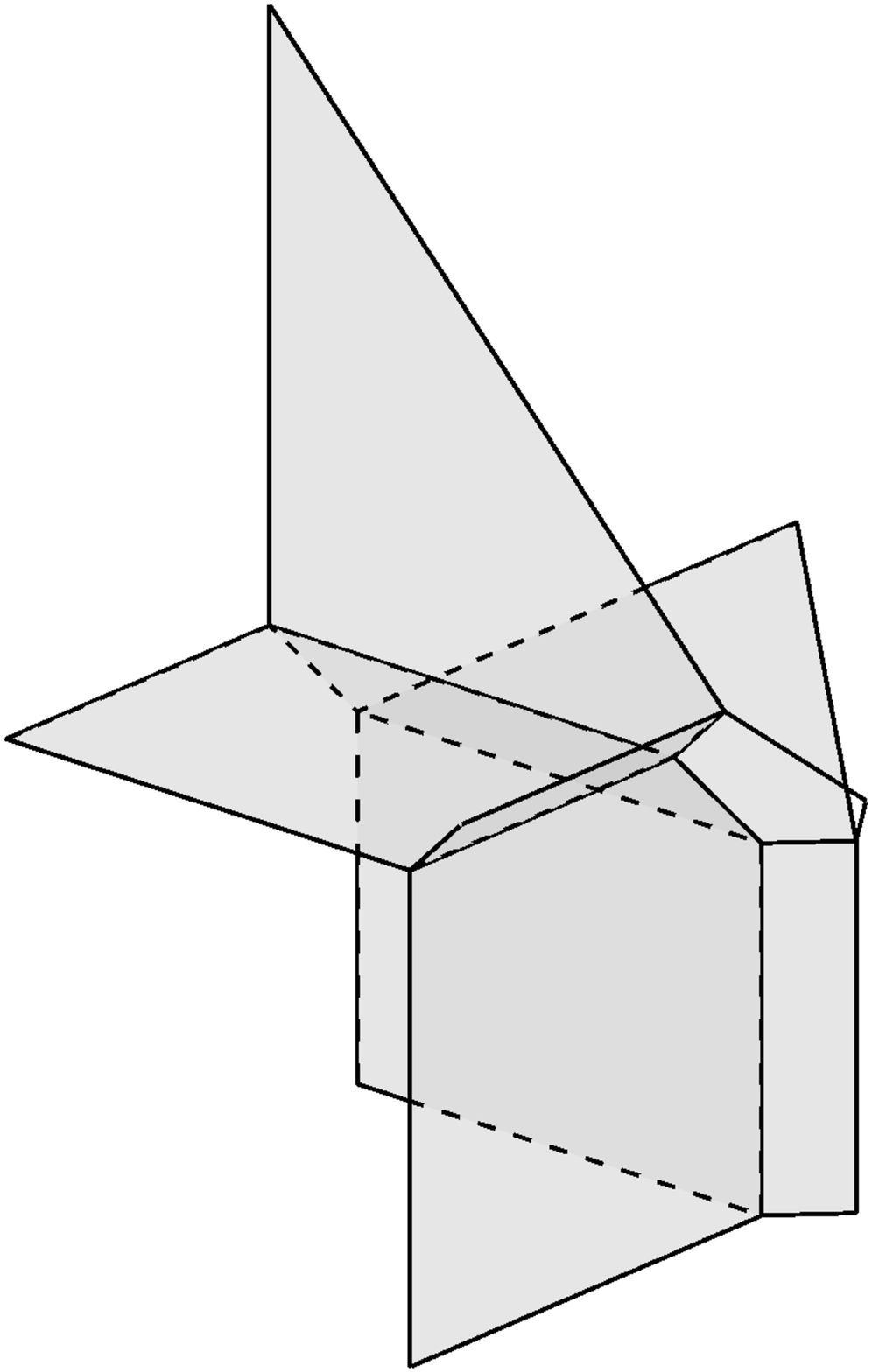}}
\setlength\abovecaptionskip{0cm}
\caption{The tropical modificiation of $\R^2$ along $``\frac{P}{Q}"$, where $``P=x+y+0"$ and $``Q=y+(-1)"$.}
\label{tropmodif}
\end{figure} 
\begin{figure}[h!]
\centerline{
\includegraphics[scale=0.1]{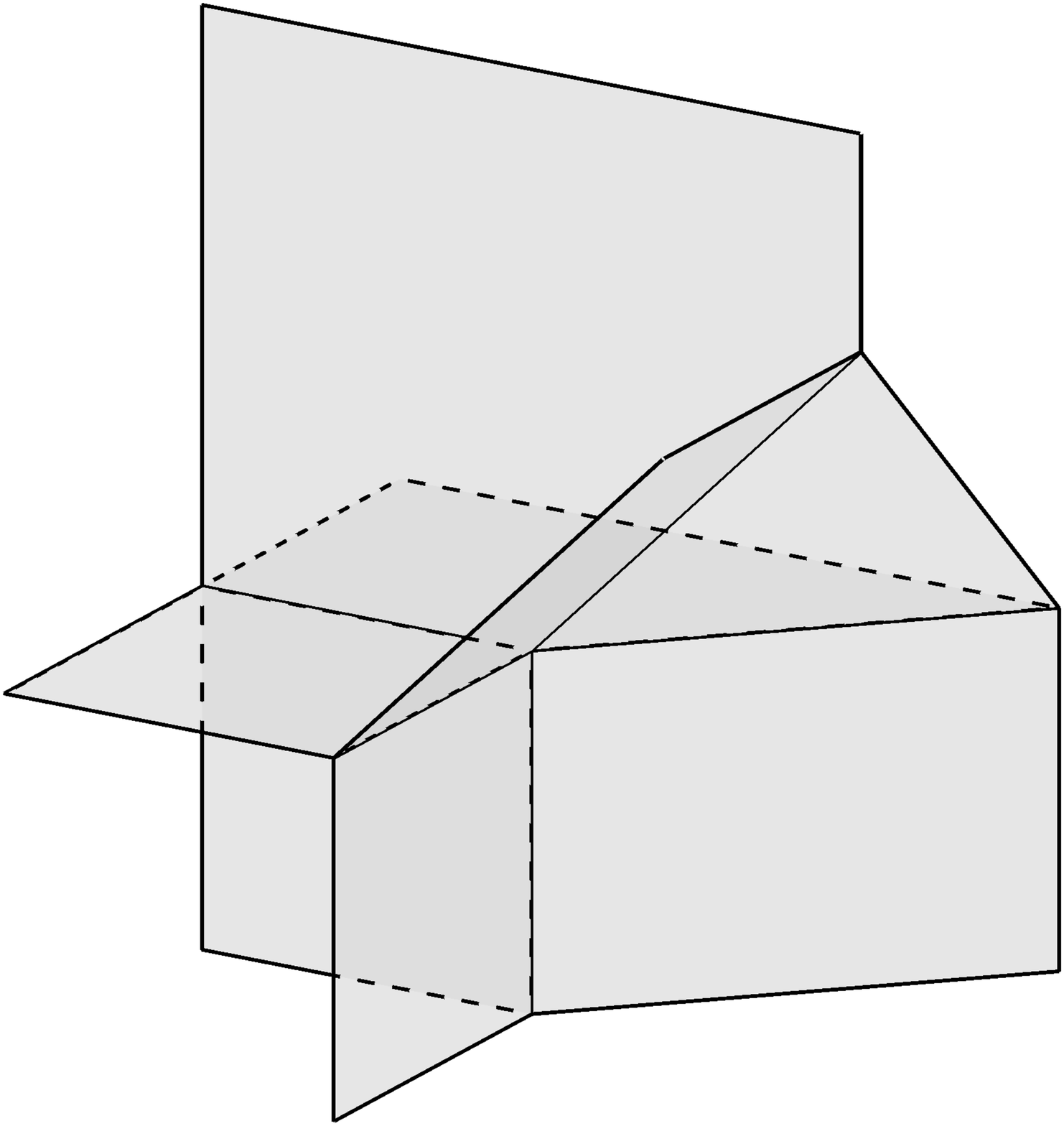}}
\setlength\abovecaptionskip{0cm}
\caption{The tropical modificiation of $\R^2$ along $``\frac{P}{Q_1}"$, where $``P=x+y+0"$ and $``Q_1=y+0"$.}
\label{tropmodif2}
\end{figure} 
\end{example}

\section{Strategy of the construction}
\label{strategy}
Let $n\geq 1$. We construct the curve $\mathcal{D}_n$ (resp., $\mathcal{C}_n$) in a $1$-parameter family of curves $\mathcal{D}_{n,t}$ (resp., $\mathcal{C}_{n,t}$). To construct such families of curves, we construct a tropical curve $D_n$ with Newton polytope $\Delta_n$ (see Figure \ref{courbe1} for the case $n=3$) and a tropical curve $C_n$ with Newton polytope $\Theta_n$ (see Figure \ref{courbe2} for the case $n=3$). The family of curves $\mathcal{D}_{n,t}$ (resp. $\mathcal{C}_{n,t}$) then appears as an approximating family of the tropical curve $D_n$ (resp., $C_n$). It turns out that the tropical curves $C_n$ and $D_n$ do not intersect transversely (see Figure \ref{courbe3}), so we can not use directly combinatorial patchworking to determine the mutual position of the  curves $\mathcal{D}_{n,t}$ and $\mathcal{C}_{n,t}$. We consider then the tropical modification $X_n$ of $\R^2$ along $P_n$, where $P_n$ is a tropical polynomial defining $D_n$. In this new model, the curve $D_n$ is the boundary in the vertical direction of the compactification of $X_n$ in $\T^n$, and if $\tilde{C}_n$ is a lifting of $C_n$ in $X_n$ (see Definition \ref{deflift}), then the compactification of $\tilde{C}_n$ in $\T^n$ intersects $D_n$ transversely. Then, we show that the curve $\tilde{C}_n$ is the transverse intersection of $X_n$ with $Y_n$, a tropical modification of $\R^2$ along a tropical rational function (see Definition \ref{intersecttransvers}). We define real phases $\varphi_{D_n}$ on $D_n$ and real phases $\varphi_{C_n}$ on $C_n$ (see  Figure \ref{courbe7} and Figure \ref{courbe8} for the case $n=3$) and we construct real phases $\varphi_{X_n}$ on $X_n$ and real phases $\varphi_{Y_n}$ on $Y_n$ satisfying compatibility conditions with $\varphi_{D_n}$ and $\varphi_{C_n}$ (see Lemma \ref{lemmarealphases}). 
It follows from Theorem \ref{tropreal} that there exists a family of real polynomials $P_{n,t}$ with Newton polytopes $\Delta_n$ such that if we put
$$
\mathcal{D}_{n,t}=\left\lbrace P_{n,t}(x,y)=0\right\rbrace,
$$ 
and
$$
\mathcal{X}_{n,t}=\left\lbrace z+P_{n,t}(x,y)=0\right\rbrace,
$$
one has
$$
\lim_{t\rightarrow +\infty}\mathrm{Log}_t \left(\R \mathcal{D}_{n,t}\cap s_\varepsilon\left((\R_+^*)^2\right)\right)\cap V=\left(\R D_n^\varepsilon\right)^{\log}\cap V,
$$
and
$$
\lim_{t\rightarrow +\infty}\mathrm{Log}_t \left(\R \mathcal{X}_{n,t}\cap s_\eta\left((\R_+^*)^3\right)\right)\cap W=\left(\R X_n^\eta\right)^{\log}\cap W,
$$
for any $\varepsilon \in (\Z/2\Z)^2$, any $\eta \in (\Z/2\Z)^3$, any compact $V\subset \R^2$ and any compact $W\subset \R^3$.
It follows from Theorem \ref{patchtropcomp} that there exists a family of surfaces $\mathcal{Y}_{n,t}$ such that for any $\varepsilon\in (\Z/2\Z)^3$ and any compact $V\subset \R^3 $, one has
$$
\lim_{t\rightarrow +\infty}\mathrm{Log}_t\left(\R \mathcal{X}_{n,t}\cap \R \mathcal{Y}_{n,t}\cap s_\varepsilon\left((\R_+^*)^3\right)\right)\cap V =\left( \R X_n^\varepsilon\right)^{\log}\cap \left( \R Y_n^\varepsilon\right)^{\log}\cap V.
$$
Consider the projection $\pi^\C:\left(\C^*\right)^3\rightarrow (\C^*)^2$ forgetting the last coordinate. For every $t$, put
$$
\mathcal{C}_{n,t}=\pi^\C(\mathcal{X}_{n,t}\cap\mathcal{Y}_{n,t}).
$$
Then, the Newton polytope of $\mathcal{C}_{n,t}$ is $\Theta_n$ and we show that for $t$ large enough, the chart of $\mathcal{D}_{n,t}\cup \mathcal{C}_{n,t}$ is homeomorphic to the chart depicted in Figure \ref{courbebrugallered}. Thus, we put
$$
\mathcal{D}_n=\mathcal{D}_{n,t},
$$
and
$$
\mathcal{C}_n=\mathcal{C}_{n,t}
$$
for $t$ large enough.



\section{Construction of the surfaces $X_{n}$ and $Y_n$, and of the curve $\tilde{C}_{n}$}
\label{construction1}
 Consider the subdivision of $\Delta_n$ given by the triangles 
$$
\Delta_n^k=Conv\left((k,0),(0,1),(k+1,0)\right),
$$ for $0\leq k\leq n-1$. Consider the subdivision of $\Theta_n$ given by the triangles 
\begin{itemize}
\item $K_n^k=Conv\left((k,0),(k,1),(k+1,0)\right)$,
\item $L_n^k=Conv\left((k,1),(k+1,0),(k+1,1)\right)$ and
\item $M_n^k=Conv\left((k,1),(k,2),(k+1,1)\right)$, 
\end{itemize}
for $0\leq k\leq n-1$.
Consider a tropical curve $D_{n}$ dual to the subdivision $\left(\Delta_n^k\right)_{0\leq k\leq n-1}$ of $\Delta_n$ (see Figure \ref{courbe1} for the case $n=3$), a tropical curve $C_{n}$ dual to the subdivision $\left(K_n^k,L_n^k,M_n^k\right)_{0\leq k\leq n-1}$ of $\Theta_n$ (see Figure \ref{courbe2} for the case $n=3$), and $2n$ marked points $x_1,\cdots ,x_{2n}$ on $C_n$, such that $D_n$, $C_n$ and $x_1,\cdots ,x_{2n}$ satisfy the following conditions.

\begin{figure}
 \begin{minipage}[l]{.46\linewidth}
  \centering\epsfig{figure=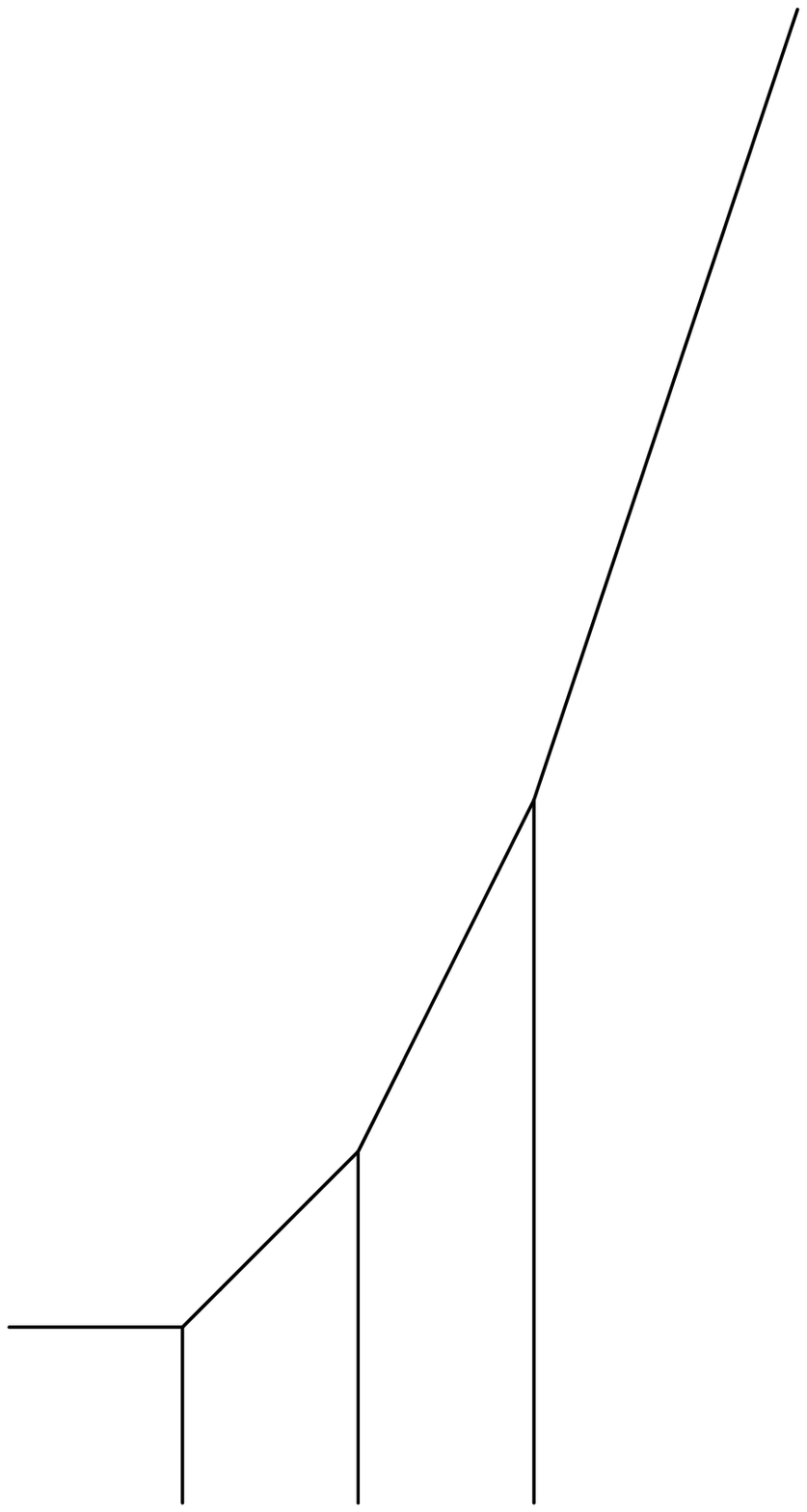,width=17cm}
  \caption{Tropical curve $D_3$. \label{courbe1}}
 \end{minipage} \hfill
 \begin{minipage}[l]{.46\linewidth}
  \centering\epsfig{figure=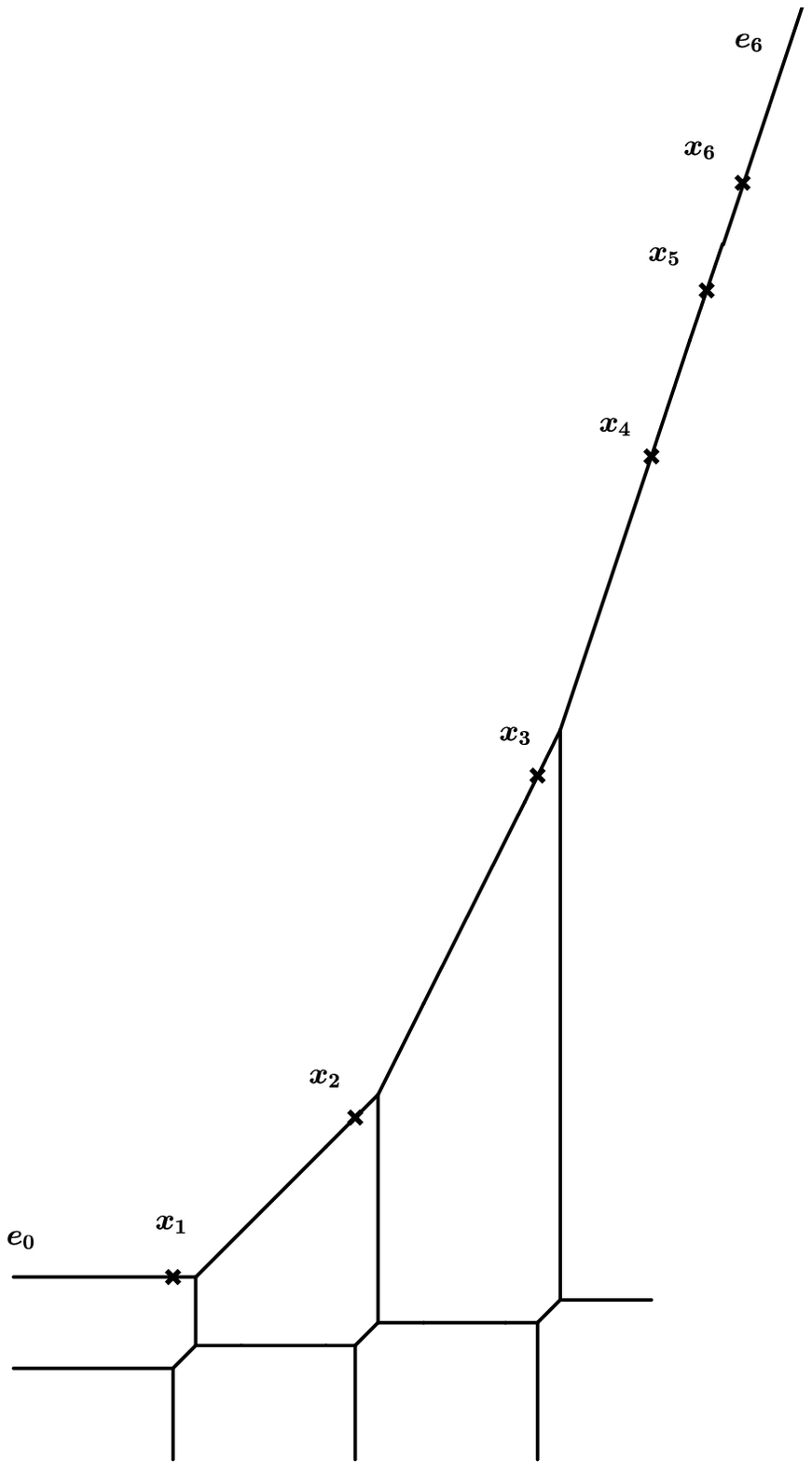,width=17cm}
  \caption{Tropical curve $C_3$. \label{courbe2}}
 \end{minipage}
\end{figure}

\begin{enumerate}
\item For any $0\leq k\leq n-1$, the coordinates of the vertex of $D_n$ dual to $\Delta_n^k$ are equal to the coordinates of the vertex of $C_n$ dual to $M_n^k$.
\item For any $1\leq k\leq n$ the first coordinate of $x_{k}$ is equal to the first coordinate of the vertex dual to $K_n^{k-1}$.
\item For any $n+1\leq k \leq 2n$, the marked point $x_k$ is on the edge of $C_n$ dual to the edge $\left[(0,2),(n,1)\right]$ of $\Theta_n$.
\end{enumerate}
For each marked point $x_i$, $1\leq i\leq 2n$, refine the edge of $C_n$ containing $x_i$ by considering the marked point $x_i$ as a vertex of $C_n$. In Figure \ref{courbe3}, we draw the tropical curves $D_3$ and $C_3$ on the same picture. 
\begin{figure}[h!]
\centerline{
\includegraphics[width=17cm]{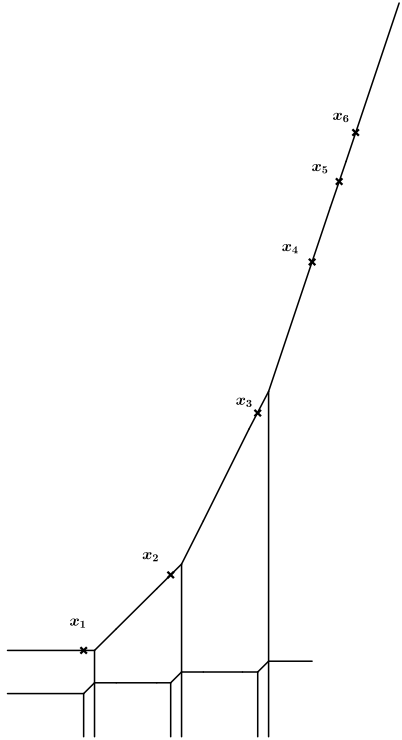}}
\setlength\abovecaptionskip{0cm}
\caption{The curves $D_3$ and $C_3$.}
\label{courbe3}
\end{figure}
 Denote by $P_n$ a tropical polynomial defining the tropical curve $D_n$ and put $X_n=\R^2_{P_n}$.
\begin{definition} 
\label{deflift0}
Let $C\subset\R^2$ be a tropical curve with $k$ vertices of valence $2$ denoted by $x_1,\cdots ,x_k$ $($called the marked points of $C)$. We say that a tropical curve $\tilde{C}\subset\R^3$ is a lift of $\left( C,x_1,\cdots ,x_k\right)$ if the following conditions are satisfied.
\begin{itemize} 
\item $\pi^\R\left(\tilde{C}\right)=C$, where $\pi^\R:\R^3\rightarrow\R^2$ denotes the vertical projection on the first two coordinates.
\item Any infinite vertical edge of $\tilde{C}$ is of the form $\left](x,-\infty),(x,r)\right]$, with $x\in\R^{2}$ and $r\in\R$.
\item  An edge $e$ of $\tilde{C}$ is an infinite vertical edge if and only if $\pi^\R (e)\subset\lbrace x_1, \cdots,x_k \rbrace$.
\item For any point $x$ in the interior of an edge $e$ of $C$ such that $(\pi^\R)^{-1}(x)\cap \tilde{C}$ is finite, one has
$$
w(e)=\sum_{i=1}^l w(f_i)\left[\Lambda_e:\Lambda_{f_i}\right],
$$
where $f_1,\cdots, f_l$ are the edges of $\tilde{C}$ containing the preimages of $x$, the weight of $e$ $($resp., $f_i)$ is denoted by $w(e)$ $($resp., $w(f_i))$ and $\Lambda_e$ $($resp., $\Lambda_{f_i})$ denotes the sublattice of $\Z^3$ generated by a primitive vector in the direction of $e$ $($resp., $f_i)$ and by the vector $(0,0,1)$.
\end{itemize} 
\end{definition}
\begin{remark}
\label{remarklift}
Let $C$ be a nonsingular tropical curve in $\R^2$ with $k$ marked points $x_1,\cdots, x_k$ such that there exists a lifting $\tilde{C}$ of $\left(C,x_1,\cdots,x_k\right)$ in $\R^3$. Then, it follows from Definition \ref{deflift0} that for any edge $e$ of $C$, there exists a unique edge $f$ of $\tilde{C}$ such that $\pi^\R(f)=e$. Moreover, one has $w(f)=1$. It follows from the balancing condition that at any trivalent vertex $v$ of $C$, the directions of the lifts of any two edges adjacents to $v$ determine the direction of the lift of the third edge adjacent to $v$. At any marked point $x_i$, the direction of the lift of an edge adjacent to $x_i$ and the weight of the infinite vertical edge of $\tilde{C}$ associated to $x_i$ determine the direction of the lift of the other edge adjacent to $x_i$.  

\end{remark}
\begin{definition}
\label{deflift}
Let $h$ be a tropical rational function on $\R^2$ and let $\R^2_h$ be the tropical modification of $\R^2$ along $h$.
Let $C\subset\R^2$ be a tropical curve with $k$ marked points $x_1,\cdots ,x_k$. We say that the marked tropical curve $\left( C,x_1,\cdots ,x_k\right)$ can be lifted to $\R^2_h$ if there exists a lifting $\tilde{C}$ of $\left( C,x_1,\cdots ,x_k\right)$ in $\R^3$ such that $\tilde{C}\subset\R^2_h$.
\end{definition} 
\begin{remark}
\label{remarkliftmod}
Assume that a trivalent marked tropical curve $\left(C,x_1,\cdots,x_{k}\right)$ can be lifted to some tropical modification $\R^2_h$, where $h=``\dfrac{f}{g}"$ is some tropical rational function. It follows from the definition of a tropical modification that outside of $V(f)\cup V(g)$, the lift of an edge of $C$ to $\R^2_h$ is uniquely determined. 
\end{remark}
\begin{lemma}
There exists a unique lifting of the marked tropical curve \linebreak $\left(C_n,x_1,\cdots,x_{2n}\right)$ to $X_n$. 
\end{lemma}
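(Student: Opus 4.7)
The plan is to construct the lift $\tilde{C}_n$ edge by edge, propagating from the unbounded edges of $C_n$ inward, and to argue at each step that the lift is forced by the earlier definitions. The key point is that by Remark \ref{remarkliftmod} the tropical modification $X_n$ coincides, away from $V(P_n)=D_n$, with the graph $\Gamma_{P_n}$ of the piecewise linear function $P_n$, so any portion of $C_n$ contained in a single region of $\R^2\setminus D_n$ admits a unique lift, namely its graph under $P_n$. Combined with the third bullet of Definition \ref{deflift0}, which forces the unbounded vertical edges of $\tilde{C}_n$ to sit precisely over the marked points $x_1,\ldots ,x_{2n}$, this already dictates the lift over the complement of $D_n$ and over the unbounded edges of $C_n$, hence gives uniqueness outside a controlled set.

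To propagate inward, I would use the following local analysis. At a trivalent vertex $v$ of $C_n$ not lying on $D_n$, the three incident edges have lifts determined by the previous step and all live on the same affine piece of $\Gamma_{P_n}$; since $\delta_{P_n}$ restricted to this piece is a lattice-preserving affine isomorphism, the balancing condition at $v$ is equivalent to the balancing condition at the lift, so there is nothing to check. At a point where an edge of $C_n$ crosses a facet of $D_n$ transversely, the lifted edge must bend along the corresponding bend of $\Gamma_{P_n}$ and is again uniquely prescribed; by Remark \ref{remarklift} it carries weight $1$. At each marked point $x_i$ the lifts of the two adjacent edges of $C_n$ are already determined, and one must attach an infinite vertical edge in direction $(0,0,-1)$ whose weight is then uniquely fixed by the balancing condition at the lifted vertex $\tilde x_i$ via the last bullet of Definition \ref{deflift0}; the second and third conditions preceding the lemma guarantee that the vertical face of $X_n$ attached below the corresponding facet of $D_n$ really passes through $\tilde x_i$, so that this infinite edge lies in $X_n$.

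The main obstacle is the consistency check at the vertices of $C_n$ which coincide with vertices of $D_n$, namely the vertices dual to the triangles $M_n^k$ singled out by the first condition preceding the lemma: at such a point several edges of $C_n$ meet several edges of $D_n$ simultaneously, and one must verify that the lifts forced around this vertex by the adjacent edges glue into a balanced piece of $X_n$. The argument for this is a triangle-by-triangle computation on the Newton side, reading off the slopes of the edges of $C_n$ and of $D_n$ from the triangulations $(\Delta_n^k)$ and $(K_n^k,L_n^k,M_n^k)$, and checking that at each such shared vertex the primitive integer directions of the lifted edges satisfy the balancing condition in $\R^3$. Together with the analogous verification at the transverse crossings of $C_n$ with $D_n$, this yields existence; uniqueness is immediate from the fact that at every step of the propagation the lift was forced.
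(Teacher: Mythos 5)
Your starting point --- away from $D_n=V(P_n)$ the modification $X_n$ coincides with the graph of $P_n$, so the lift of any edge of $C_n$ not contained in $D_n$ is forced, and the infinite vertical edges must sit over the marked points --- is exactly the paper's. The gap lies in where you place the remaining difficulty. By the first condition of the construction, the whole upper part of $C_n$ (the spine through the vertices dual to the $M_n^k$, the unbounded ends adjacent to it, and the initial segments of the vertical edges descending from those vertices) is \emph{contained} in $D_n$; all of these edges, and in particular the edges carrying every one of the marked points, lift into the vertical facets of $X_n$, where the third coordinate of the direction is a free parameter. Consequently neither the weights $w(s_i)$ nor the slopes of these lifts are determined by local balancing, contrary to what you assert at the marked points: the two adjacent lifts there are precisely the undetermined ones. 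The paper pins them down globally, using the two unbounded ends $e_0$ and $e_{2n}$: the lift of $e_0$ has direction $(1,0,s)$ with $s\ge 0$, the lift of $e_{2n}$ has vertical slope $s+\sum w(s_i)\le n$, and since the $n$ intermediate weights are each at least $1$ this forces $s=0$ and all $w(s_i)=1$.

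More seriously, what you call the main obstacle --- a local balancing check at the vertices shared by $C_n$ and $D_n$ --- is not the real one, and a triangle-by-triangle verification at vertices cannot detect the real one: locally at each vertex balancing can always be achieved by adjusting the free vertical slopes above. The genuine obstruction is global. Each cycle $Z_k$ of $C_n$ contains edges lifted into vertical facets, whose slopes are determined by propagation from either side of the cycle, and the two determinations are compatible if and only if $\sum_{i}l_i^k\overrightarrow{e^k_i}=0$ holds in $\R^3$. The planar part of this relation is automatic, but the third coordinate imposes a metric condition on the edge lengths of $Z_k$ (namely $l_1^k=l_5^k$ and $l_2^k=l_4^k$), which is equivalent to the second condition of the construction, i.e.\ to the prescribed position of the marked point $x_k$. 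Your argument never invokes that condition, yet moving $x_k$ along its edge destroys the existence of the lift, so no proof that omits it can be complete.
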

\begin{proof}
It follows from Remark \ref{remarkliftmod} that for all edges $e$ of $C_n$ not belonging to $D_n$, the lift of $e$ in $X_n$ is uniquely determined. The lift of a marked point $x_i$ is an edge $s_i$ of the form $\left](x_i,-\infty),(x_i,r_i)\right]$, where $r_i\in\R$. Denote by $e_0$ the edge of $C_n$ dual to the edge $\left[ (0,1),(0,2)\right]$ and by $e_{2n}$ the infinite edge of $C_n$ dual to the edge $\left[ (0,2),(n,1) \right]$ adjacent to $x_{2n}$ (see Figure \ref{courbe2} for the case $n=3$). 
One can see from Remark \ref{remarklift} that if the direction of $\tilde{e}_0$ is $(1,0,s)$ then the direction of the lift of $e_{2n}$ is $(1,0,s+\sum w(s_i))$. Since the edges $e_0$ and $e_{2n}$ are unbounded, one has $s\geq 0$ and $s+\sum w(s_i)\leq n$. Thus, the direction of $\tilde{e}_0$ is $(1,0,0)$ and $w(s_i)=1$, for $1\leq i\leq n$. One can see following Remark \ref{remarklift} that in this case the direction of a lift of any edge of $C_n$ is uniquely determined. The only potential obstructions on the lifts of the edges of $C_n$ to close up come from the cycles of $C_n$. Denote by $Z_k$ the cycle bounding the face dual to the vertex $(k-1,1)$ of $\Theta_n$, for $2\leq k\leq n$ (see Figure \ref{cycle}). Denote by $e^k_1,\cdots ,e^k_6$ the edges of $Z_k$ as indicated in Figure \ref{cycle}. Orient the edges $e^{k}_i$ with the orientation coming from the counterclockwise orientation of the cycle $Z_k$.
\begin{figure}[h!]
\centerline{
\includegraphics[width=20cm]{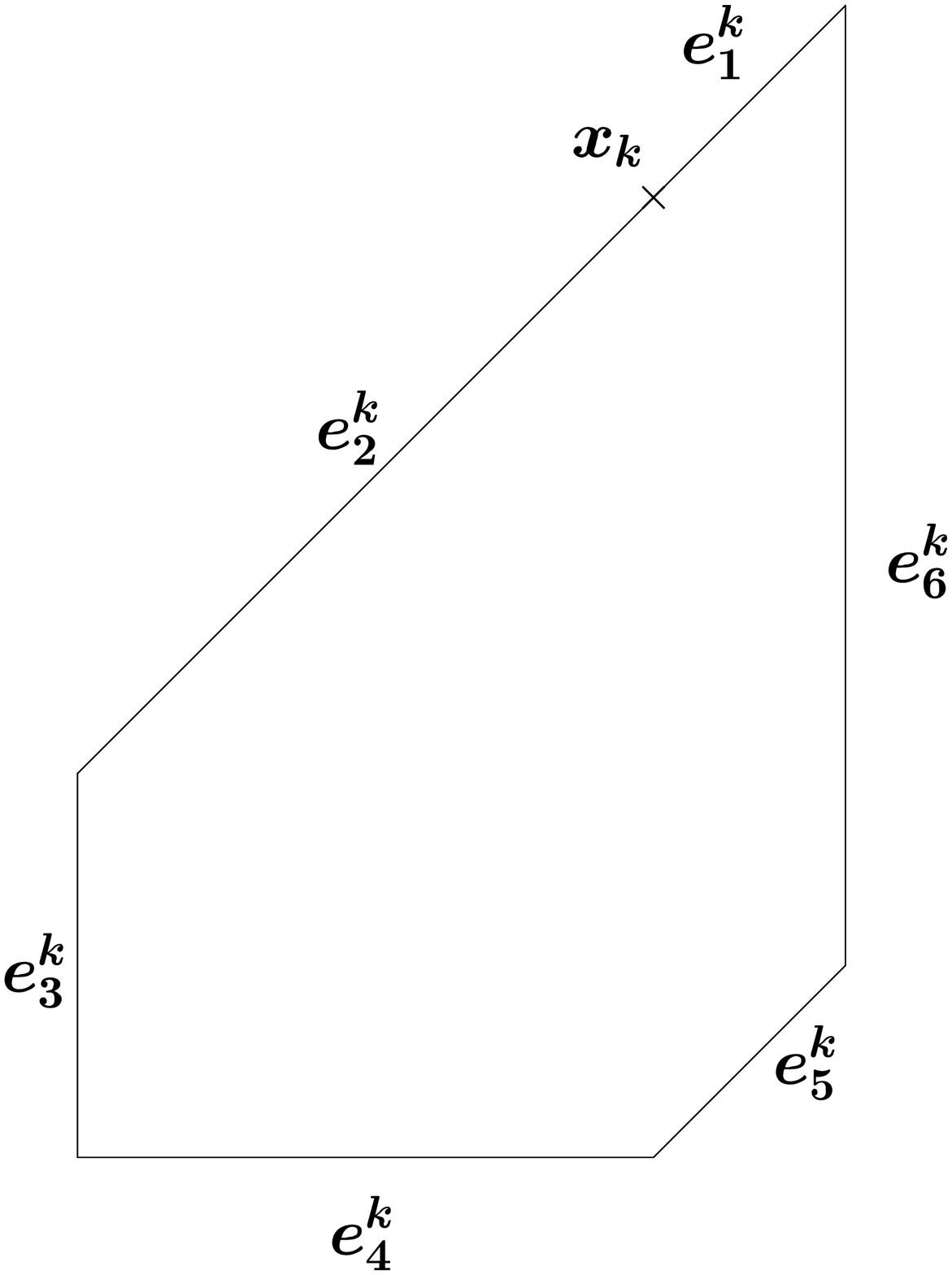}}
\caption{The cycle $Z_k$.}
\label{cycle}
\end{figure}
Denote by $\left[e_i^k\right]$ the lift of the edge $e_i^k$, oriented with the orientation coming from the one of $e_i^k$. Denote by $\overrightarrow{e^k_i}$ the primitive vector in the direction of $\left[e^k_i\right]$, and denote by $l_i^k$ the integer length of $e^k_i$. The lifts of the edges of $Z_k$ close up if and only if the following equation is satisfied:
$$
\sum_{i=1}^{i=6}l_i^k\overrightarrow{e^k_i}=0.
$$
This equation is equivalent to
$$
l_1^k
\left( \begin{array}{c}
-1 \\
-k \\
-1 \\
\end{array} \right)
+l_2^{k}
\left( \begin{array}{c}
-1 \\
-k \\
0 \\
\end{array} \right)
+l_3^{k}
\left( \begin{array}{c}
0 \\
-1 \\
1 \\
\end{array} \right)
+
l_4^k
\left( \begin{array}{c}
1 \\
0 \\
k \\
\end{array} \right)
+l_5^k
\left( \begin{array}{c}
1 \\
1 \\
k \\
\end{array} \right)
+l_6^k
\left( \begin{array}{c}
0 \\
1 \\
-1 \\
\end{array} \right)
=0.
$$   
This equation is equivalent on the cycle $Z_k$ to
$$
\left\lbrace \begin{array}{c}
l_2^k=l_4^k, \\
l_1^k=l_5^k. \\
\end{array} \right.
$$
This is equivalent to say that the first coordinate of the marked point $x_{k}$ is equal to the first coordinate of the vertex dual to $K_n^{k-1}$, for $2\leq k\leq n$.
\end{proof}
We construct a tropical rational function $h_n$ such that the curve $\tilde{C}_n$ is the transverse intersection of $X_n$ and $\R^2_{h_n}$.
Consider the subdivision of $\Theta_n$ given by the triangles 
\begin{itemize}
\item $G_n^k=Conv\left((k,0),(0,1),(k+1,0)\right)$,
\item $H_n^k=Conv\left((k,1),(k+1,1),(n,0)\right)$ and
\item $I_n^k=Conv\left((k,1),(0,2),(k+1,1)\right)$, 
\end{itemize}
for $0\leq k\leq n-1$. Consider a tropical curve $F_n$ dual to the subdivision \linebreak $\left(G_n^k,H_n^k,I_n^k\right)_{0\leq k\leq n-1}$ and a horizontal line $E$ such that the following conditions are satisfied (see Figure \ref{courbe4} and Figure \ref{courbe6} for the case $n=3$).
\begin{enumerate}
\item The  horizontal line $E$ is below any vertex of $C_n$.
\item The marked point $x_k$ is on the edge dual to the edge $[(k-1,0),(k,0)]$, for $1\leq k\leq n$.
\item The edge of $F_n$ dual to the edge $[(1,1),(n,0)]$ intersects transversely the edge of $C_n$ dual to the edge $[(0,2),(n,1)]$ at the point $x_{n+1}$.
\item The edge of $F_n$ dual to the edge $[(k,1),(k+1,1)]$ intersects transversely the edge of $C_n$ dual to the edge $[(0,2),(n,1)]$ at the point $x_{n+k+1}$, for $1\leq k\leq n-1$.
\end{enumerate}
\begin{figure}
 \begin{minipage}[l]{.46\linewidth}
  \centering\epsfig{figure=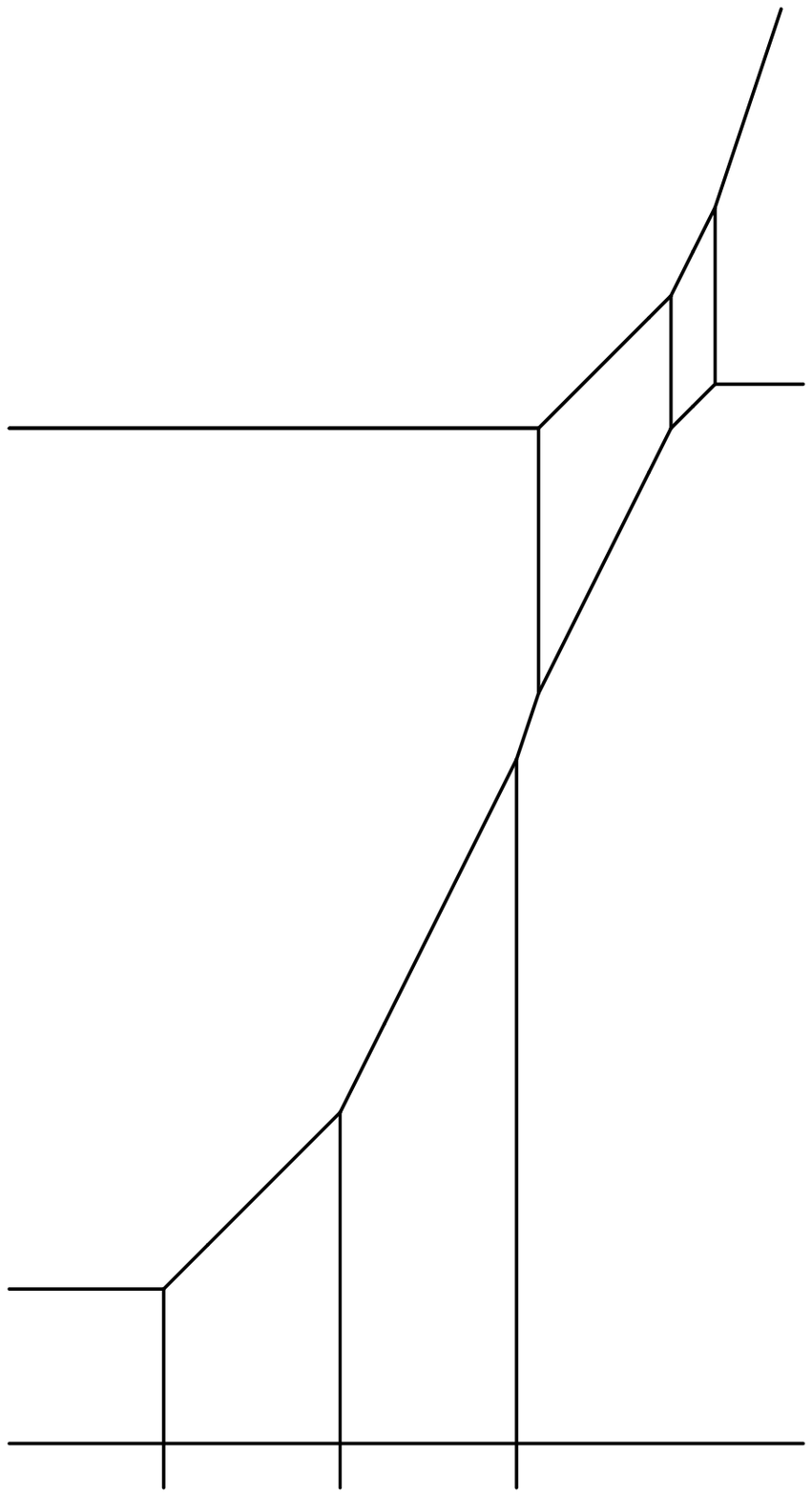,width=17cm}
  \caption{The curve $F_3$ and the curve $E$. \label{courbe4}}
 \end{minipage} \hfill
 \begin{minipage}[l]{.46\linewidth}
  \centering\epsfig{figure=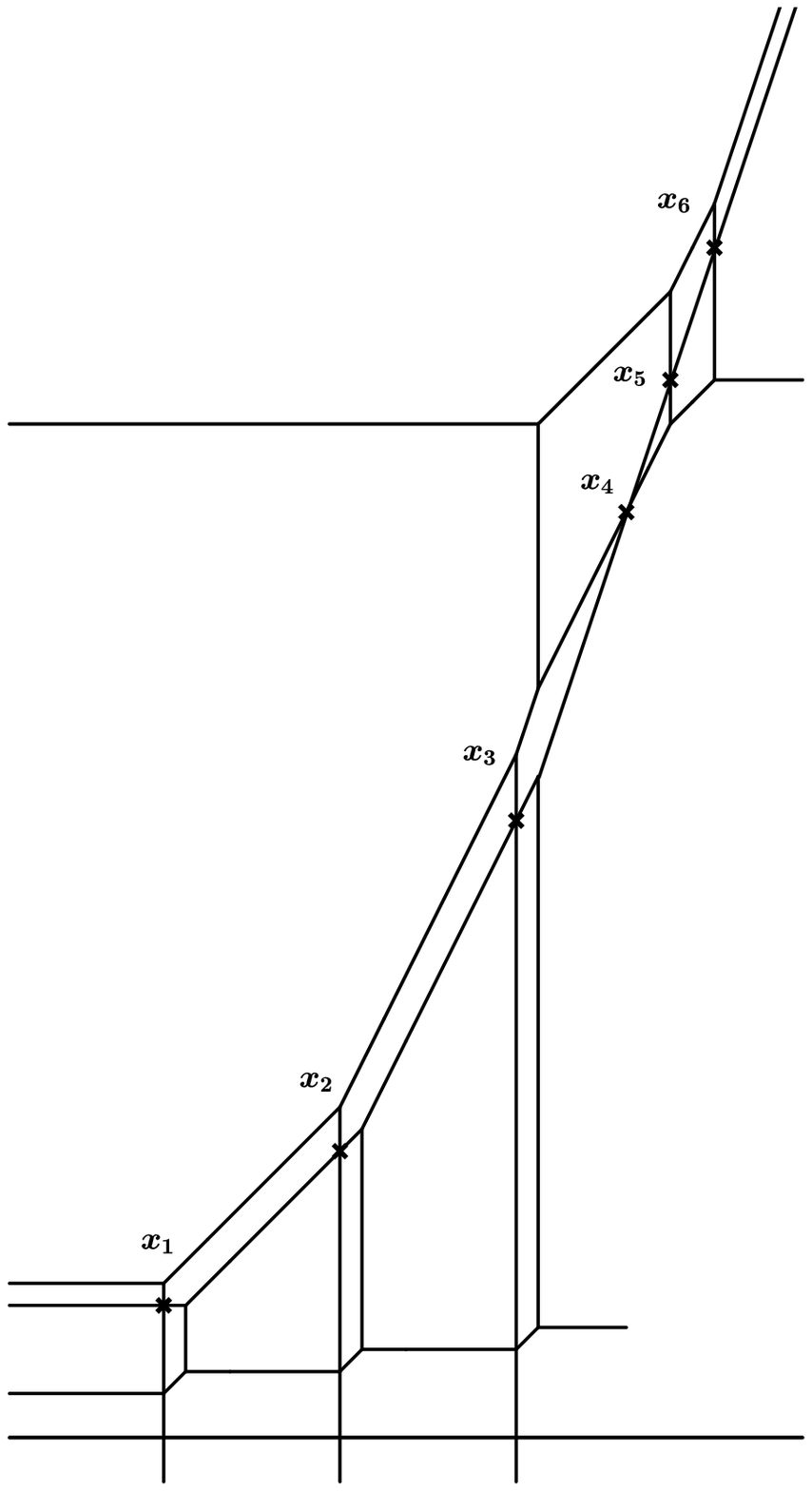,width=17cm}
  \caption{The relative position of the curves $F_3$, $C_3$ and $E$. \label{courbe6}}
 \end{minipage}
\end{figure}
Denote by $f_n$ a tropical polynomial satisfying $V(f_n)=F_n$ and by $g$ a tropical polynomial satisfying $V(g)=E$. Put $h_n^0=``\frac{f_n}{g}"$.
\begin{lemma}
\label{lemma}
\label{uniquelift}
There exists a unique lifting of the curve $\left(C_n,x_1,\cdots ,x_{2n}\right)$ to $\R^2_{h_n^0}$. Moreover, there exists $\lambda_0\in\R$ such that $\tilde{C_n}$ is the lifting of $\left(C_n,x_1,\cdots ,x_{2n}\right)$ to $\R^2_{``\lambda_0 h_n^0"}$.
\end{lemma}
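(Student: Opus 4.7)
The plan is to follow the same strategy as in the preceding lemma. First, by conditions (1)--(4) in the construction of $F_n$ and $E$, the tropical curve $C_n$ meets $V(f_n)\cup V(g)=F_n\cup E$ only at the marked points $x_1,\dots,x_{2n}$, and at each $x_i$ it crosses transversely a single edge of $F_n$ or $E$ of weight one. Applying Remark \ref{remarkliftmod} together with Definition \ref{deflift0}, this means that the lift to $\R^2_{h_n^0}$ of any edge of $C_n$ is uniquely prescribed away from the marked points (it is the graph of the relevant linear piece of $h_n^0$), while the lift of each $x_i$ is an infinite vertical edge $s_i=\,](x_i,-\infty),(x_i,r_i)]$ whose weight is forced to be $w(s_i)=1$ by the balancing condition applied at the crossing with the weight-one edge of $F_n\cup E$.

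Next, I would check that these local prescriptions are globally consistent. Arguing as in the previous lemma, the slope bounds on the two unbounded horizontal edges of $C_n$ force the direction of the lift of the leftmost unbounded edge $e_0$ to be $(1,0,0)$, and Remark \ref{remarklift} then propagates a preferred direction for every lifted edge. The only possible obstruction is that the lifts close up on each cycle $Z_k$ of $C_n$. I would redo the vector calculation performed for $X_n$: the sum of the direction vectors of the edges of $Z_k$, weighted by the integer lengths $l_i^k$ and augmented by the downward contributions of the marked points sitting on $Z_k$, vanishes precisely when $l_2^k=l_4^k$ and $l_1^k=l_5^k$, the same system as for $X_n$, which holds by the choice of the marked points and the positions of $F_n$ and $E$ imposed in conditions (2)--(4). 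This yields existence and uniqueness of the lifting to $\R^2_{h_n^0}$.

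For the second assertion, let $\tilde{C}_n^{0}$ denote the lifting just constructed. Since $\tilde{C}_n$ and $\tilde{C}_n^{0}$ share the underlying graph $C_n$ and, by the two analogous computations, have the same primitive direction vectors on every edge and the same weights on every infinite vertical edge, they must differ by a global translation $(0,0,\lambda_0)\in\R^3$ for some $\lambda_0\in\R$. Multiplying $h_n^0$ by the tropical constant $\lambda_0$ (i.e.\ adding $\lambda_0$ to the associated piecewise linear function) translates the entire tropical modification $\R^2_{h_n^0}$ upward by $\lambda_0$, and therefore translates the unique lift by $(0,0,\lambda_0)$ as well. Choosing this $\lambda_0$ makes the lifting to $\R^2_{``\lambda_0 h_n^0"}$ coincide with $\tilde{C}_n$.

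The step I expect to be the main obstacle is verifying that the cycle-closure equations along $h_n^0$ really reproduce the constraints $l_2^k=l_4^k$ and $l_1^k=l_5^k$ obtained for $X_n$, given that the tropical polynomials $P_n$ and $h_n^0$ have quite different supports. This should ultimately hold because $F_n$, $E$ and the marked points were engineered so that, along each cycle $Z_k$, the vertical slopes of $h_n^0$ on the edges of $Z_k$ together with the weight-one vertical drops at the marked points on $Z_k$ produce the same linear relation between the $l_i^k$'s as the modification along $P_n$ does.
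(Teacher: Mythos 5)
Your overall architecture is close to the paper's, and your treatment of the second assertion (the two lifts differ by a vertical translation, which is absorbed by replacing $h_n^0$ with $``\lambda_0 h_n^0"$) is exactly what the paper does. But the middle of your argument has a genuine gap, and it starts with a false premise. The claim that $C_n$ meets $F_n\cup E$ only at the marked points, and there transversely in weight-one edges, does not hold: the horizontal line $E$ lies below every vertex of $C_n$, so each of the $n$ unbounded downward edges of $C_n$ crosses $E$ away from any marked point; and the marked points $x_1,\dots,x_n$ sit on unbounded \emph{vertical} edges of $F_n$ (dual to $[(k-1,0),(k,0)]$) which are parallel to, and in fact overlap with, the vertical edges of $C_n$ carrying those marked points. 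So the local analysis at these places is not that of a transverse crossing, and the weights $w(s_i)=1$ are not obtained the way you describe.

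More importantly, the step you yourself flag as ``the main obstacle'' is the entire content of the lemma, and you leave it unverified. The paper does not redo the cycle-closure computation at all. It computes the directions of the facets of $\R^2_{h_n^0}$ lying over the regions $F^k$ and $G^k$ of the subdivision induced by $F_n\cup E$ that contain the edges of $C_n$ (generated by $(0,1,-1)$ and $(1,k,0)$ over $F^k$, and by $(0,1,0)$ and $(1,k,k)$ over $G^k$), and deduces that the lift to $\R^2_{h_n^0}$ of each edge of $C_n$ not contained in $F_n$ has the \emph{same direction} as the corresponding edge of the already-constructed lift $\tilde{C}_n\subset X_n$. Closure of the cycles is then automatic: by Remark \ref{remarklift} the candidate lift is unique and is forced to be a vertical translate of $\tilde{C}_n$, which is known to exist. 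Note that your own plan needs exactly this slope computation as input anyway --- both to write down the direction vectors entering your cycle equations and to justify the assertion in your last paragraph that the two lifts have the same primitive direction vectors on every edge --- so without it your proof is incomplete, and once you have it the cycle computation becomes redundant.
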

\begin{proof}
Let $e$ be an edge of $C_n$ not contained in $F_n$. Then the lift of $e$ to $\R^2_{h_n^0}$ is uniquely determined.
Consider the subdivision of $\R^2$ induced by the tropical curve $F_n\cup E$. Denote by $F^k$ the face of this subdivision dual to the point $(k,1)$, and by $G^{k}$ the face dual to the point $(k,2)$, for $0\leq k\leq n$. The direction of the face of $\R^2_{h_n^0}$ projecting to $F^{k}$ is generated by the vectors $(0,1,-1)$ and $(1,k,0)$, and the direction of the face of $\R^2_{h_n^0}$ projecting to $G^{k}$ is generated by the vectors $(0,1,0)$ and $(1,k,k)$. It follows from these computations that the lift to $\R^2_{h_n^0}$ of an edge $e$ of $C_n$ not contained in $F_n$ has same direction as the edge of $\tilde{C}_n$ projecting to $e$.  From Remark \ref{remarklift}, we deduce in this case that $\left(C_n,x_1,\cdots ,x_{2n}\right)$ has a unique lifting to $\R^2_{h_n^0}$ and that the result is a vertical translation of $\tilde{C}_n$. Then there exists $\lambda_0\in\R$ such that the lifting of $\left(C_n,x_1,\cdots ,x_{2n}\right)$ to $\R^2_{``\lambda_0 h"}$ is $\tilde{C}_n$.
\end{proof}
Put $h_n=``\lambda_0 h_n^0"$ and $Y_n=\R^2_{``\lambda_0 h"}$.
\begin{lemma}
The surfaces $X_n$ and $Y_n$ intersect transversely and $\tilde{C}_n=X_n\cap Y_n$.
\end{lemma}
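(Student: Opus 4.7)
The inclusion $\tilde{C}_n \subseteq X_n \cap Y_n$ is immediate from the construction: the earlier lemma asserting the uniqueness of the lift of $(C_n, x_1, \ldots, x_{2n})$ to $X_n$ provides $\tilde{C}_n \subseteq X_n$, while Lemma \ref{uniquelift} together with the choice $h_n = ``\lambda_0 h_n^0"$ guarantees that the same curve $\tilde{C}_n$ is also the lift to $Y_n = \R^2_{h_n}$. Hence the hard direction is $X_n \cap Y_n \subseteq \tilde{C}_n$, and the transversality has to be checked independently.

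The plan for both is a local analysis. I would partition $\R^2$ into the open two-dimensional regions cut out by $D_n \cup F_n \cup E$, the relative interiors of their edges, and their vertices. Over an open region $R$, the surface $X_n$ is the graph of the affine function $P_n|_R$ and $Y_n$ is the graph of $h_n|_R$, so $(X_n \cap Y_n) \cap \pi^{-1}(R)$ is the lift of the line $\{P_n = h_n\} \cap R$ to height $P_n$. Over the relative interior of an edge of $D_n$ (respectively $F_n$, $E$), one of the two surfaces degenerates into a downward (respectively downward, upward) vertical half-strip, and the other is still a graph, so the intersection is read off from the inequality comparing $P_n$ and $h_n$ on the relevant side. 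By the way $C_n$ is situated with respect to $D_n$, $F_n$ and $E$ in the construction, in every such case the locus obtained projects exactly onto a piece of $C_n$, at the height of $\tilde{C}_n$, with no additional component.

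Transversality is then verified at each facet of $\tilde{C}_n$. Away from $D_n \cup F_n \cup E$, the tangent plane to $X_n$ is spanned by $(1,0,\partial_x P_n)$ and $(0,1,\partial_y P_n)$ and similarly for $Y_n$, and the two planes together span $\R^3$ since the affine functions $P_n$ and $h_n$ have distinct gradients on the region (otherwise $P_n - h_n$ would be constant and the two surfaces would either coincide or be disjoint there). On the vertical half-strips over edges of $D_n$, $F_n$ or $E$, one of the tangent planes contains the vertical direction $(0,0,1)$, and in each of the finitely many combinatorial types of facet-facet incidence occurring along $\tilde{C}_n$ a direct inspection shows that the two tangent planes span $\R^3$.

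The main obstacle will be the bookkeeping at the non-generic points where $D_n$, $F_n$, $E$ and the marked points $x_i$ interact: the vertices of $D_n$ coinciding with vertices of $C_n$ by condition (1) of the construction, the marked points $x_1, \ldots, x_n$ lying simultaneously on edges of $D_n$ and of $F_n$ by condition (2), and the intersection points $x_{n+1}, \ldots, x_{2n}$ of $F_n$ with the unbounded edge of $C_n$. At each such point one must exclude a stray two-dimensional intersection component, which amounts to checking that the vertical fibres of $X_n$ over $D_n$ and of $Y_n$ over $F_n$ (or of $Y_n$ above $E$) cannot overlap in a whole half-line at the point in question; this is ensured by the strict inequality between $P_n$ and $h_n$ in the adjacent regions. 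Once this case analysis is completed, $X_n \cap Y_n$ is a purely one-dimensional transverse intersection, and combining this with $\tilde{C}_n \subseteq X_n \cap Y_n$ and the uniqueness of the lift yields the desired equality $X_n \cap Y_n = \tilde{C}_n$.
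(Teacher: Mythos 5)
Your proposal takes a genuinely different route from the paper. You argue locally: stratify $\R^2$ by the arrangement $D_n\cup F_n\cup E$, describe $X_n\cap Y_n$ over each stratum, and check transversality and the absence of stray components case by case. The paper instead argues globally. After the same first step ($\tilde{C}_n\subset X_n\cap Y_n$, and every edge of $\tilde{C}_n$ is a transverse intersection of a facet of $X_n$ with a facet of $Y_n$, hence $\tilde{C}_n$ is contained in the stable intersection $X_n\cdot Y_n$), it reads off from the faces of the Minkowski sum $\Delta(X_n)+\Delta(Y_n)$ the unbounded edges of $X_n\cdot Y_n$ counted with multiplicity (two in direction $(-1,0,0)$, $n$ in direction $(0,-1,0)$, $2n$ in direction $(0,0,-1)$, one each in directions $(1,0,n)$ and $(1,n,n)$), observes that $\tilde{C}_n$ already exhausts this list, and concludes $\tilde{C}_n=X_n\cdot Y_n=X_n\cap Y_n$. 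What the global count buys is exactly the elimination of what you call the main obstacle: any extra component of the intersection, bounded or not, would force extra unbounded rays or extra multiplicity in the stable intersection, so nothing has to be excluded pointwise.

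As written, your argument has a gap precisely at that point: the two steps you label ``direct inspection'' and ``bookkeeping'' are where all the content lies, and you defer rather than perform them. In particular, the exclusion of two-dimensional overlaps of vertical fibres at the points where $D_n$, $F_n$, $E$ and the marked points interact rests on ``the strict inequality between $P_n$ and $h_n$ in the adjacent regions,'' but those inequalities are exactly what must be derived from the conditions imposed on $F_n$ and $E$ (the line $E$ lying below all vertices of $C_n$, the prescribed incidences of $F_n$ with the marked points), and you do not derive them. To make your approach complete you would need to enumerate the finitely many local configurations along $\tilde{C}_n$ and verify each; alternatively, the Newton-polytope count of unbounded directions closes the argument in one stroke.
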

\begin{proof}
It follows from Lemma \ref{lemma} that $\tilde{C}_n\subset X_n\cap Y_n$. By construction, any edge of $\tilde{C}_n$ is the transverse intersection of a face of $X_n$ with a face of $Y_n$. So $\tilde{C}_n\subset X_n\cdot Y_n$, where $X_n\cdot Y_n$ denotes the stable intersection of $X_n$ and $Y_n$. Let us compute the number (counted with multiplicity) of infinite edges of $X_n\cdot Y_n$ in a given direction. Denote by $\Delta(X_n)$ the Newton polytope of $X_n$, and denote by $\Delta(Y_n)$ the Newton polytope of $Y_n$. One has
$$
\Delta(X_n)=Conv\left((0,0,0),(n,0,0),(0,1,0),(0,0,1)\right),
$$
and
$$
\Delta(Y_n)=Conv\left((0,0,0),(n,0,0),(n,1,0),(0,2,0),(0,0,1),(1,0,1)\right),
$$
see Figure \ref{NP1} and Figure \ref{NP2}.
\begin{figure}
 \begin{minipage}[l]{.46\linewidth}
  \centering\epsfig{figure=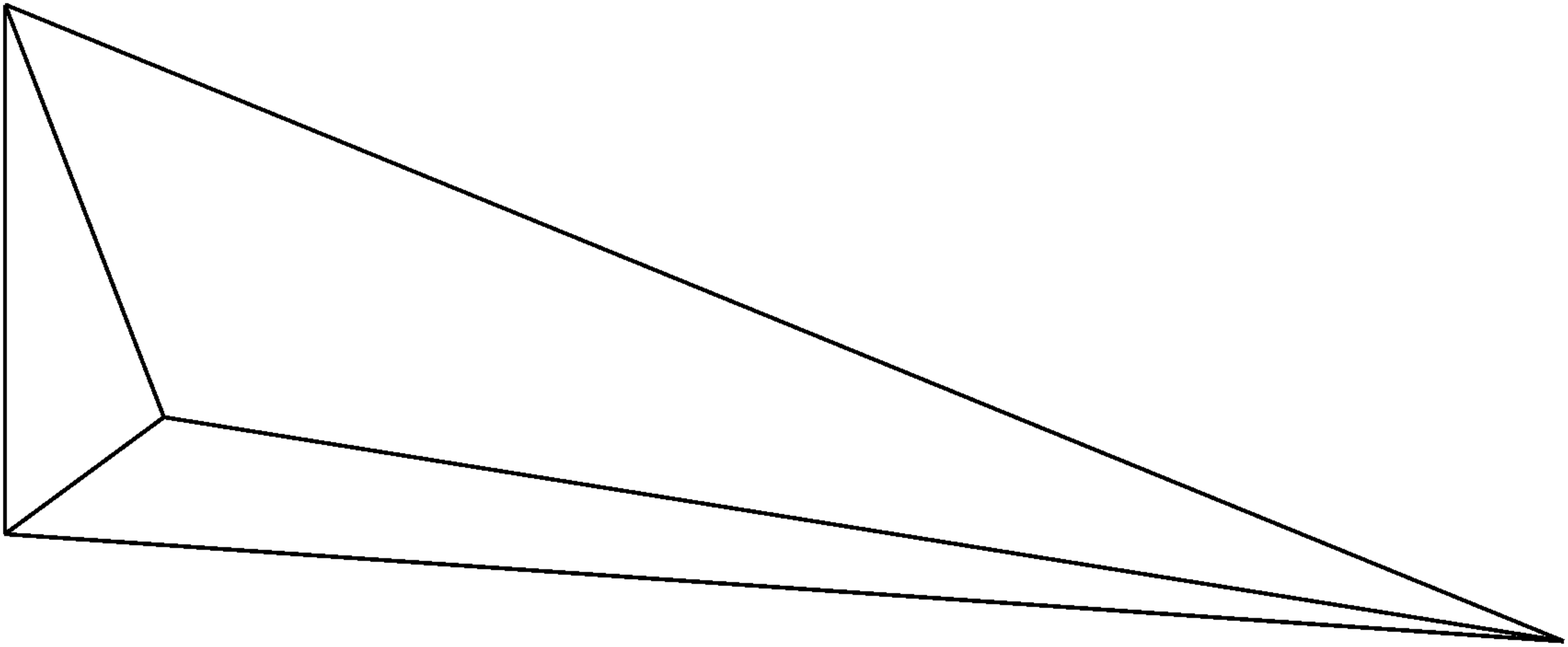,width=8cm}
  \caption{The Newton polytope of $X_n$. \label{NP1}}
 \end{minipage} \hfill
 \begin{minipage}[l]{.46\linewidth}
  \centering\epsfig{figure=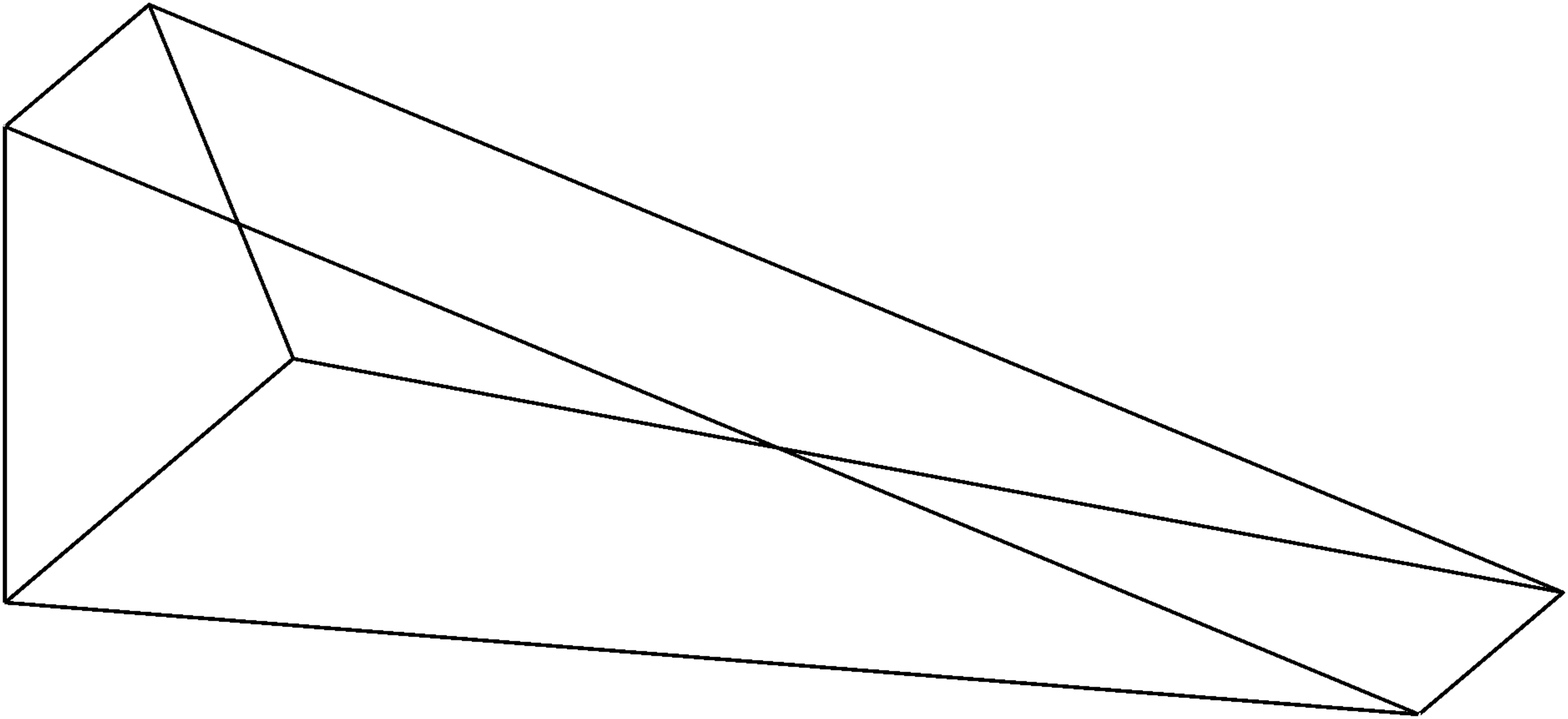,width=8cm}
  \caption{The Newton polytope of $Y_n$. \label{NP2}}
 \end{minipage}
\end{figure}
By considering faces of the Minkowsky sum 
$$
\Delta(X_n)+\Delta(Y_n)=\left\lbrace a+b\mid a\in \Delta(X_n) \mbox{ and } b\in\Delta(Y_n) \right\rbrace,
$$ one can see that $X_n\cdot Y_n$ has two edges (counted with multiplicity) of direction $(-1,0,0)$, $n$ edges (counted with multiplicity) of direction $(0,-1,0)$, $2n$ edges (counted with multiplicity) of direction $(0,0,-1)$, one edge of direction $(1,0,n)$ and one edge of direction $(1,n,n)$. Since the curve $\tilde{C}_n$ has also two edges of direction $(-1,0,0)$, $n$ edges of direction $(0,-1,0)$, $2n$ edges of direction $(0,0,-1)$, one edge of direction $(1,0,n)$ and one edge of direction $(1,n,n)$, we conclude that  $\tilde{C}_n = X_n\cdot Y_n=X_n\cap Y_n$. 
\end{proof}
\section{Real phases on $X_{n}$ and $Y_{n}$}
\label{construction2}
\label{sectionrealphases}
Consider the Harnack phase $\varphi_{D_n}$ on $D_n$ and the Harnack phase $\varphi_{C_n}$ on $C_n$ (see Definition \ref{Harnackphase}). We depicted the real part of $D_3$ on Figure \ref{courbe7} and the real part of $C_3$ on Figure \ref{courbe8}. Notice that for the real phase $\varphi_{C_n}$ on $C_n$, every edge of $C_n$ containing a marked point is equipped with the sign $(-,+)$. Consider the $2n$ marked points $r_1,\cdots r_{2n}$ on $\R C_n$, where $r_i$ is the symmetric copy of $x_i$ in the quadrant $\R_{-}^*\times\R_{+}^*$ (see Figure \ref{courbe8} for the case $n=3$).

\begin{figure}
 \begin{minipage}[l]{.46\linewidth}
  \centering\epsfig{figure=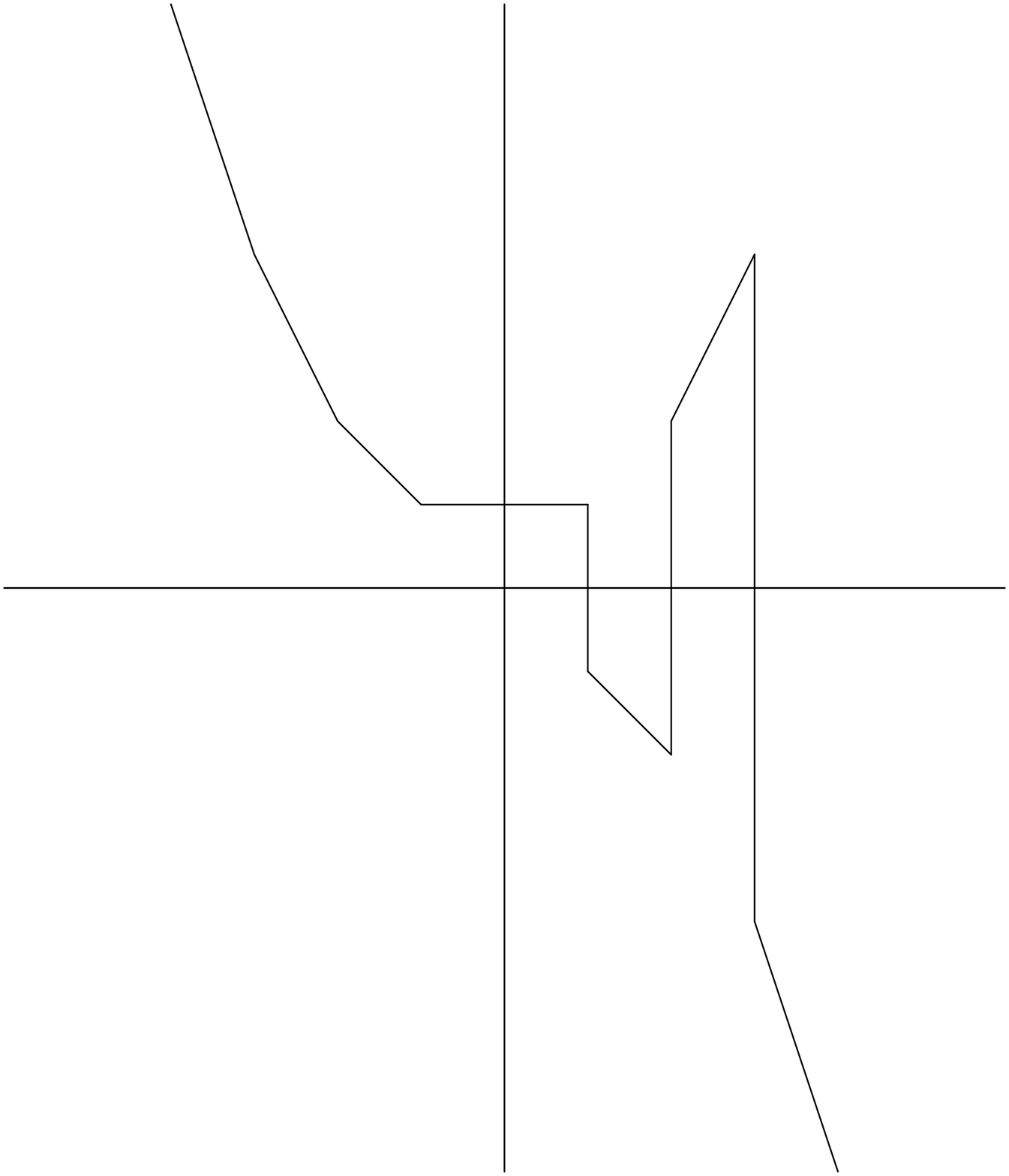,width=11cm}
  \caption{The real part $\R D_3$. \label{courbe7}}
 \end{minipage} \hfill
 \begin{minipage}[l]{.46\linewidth}
  \centering\epsfig{figure=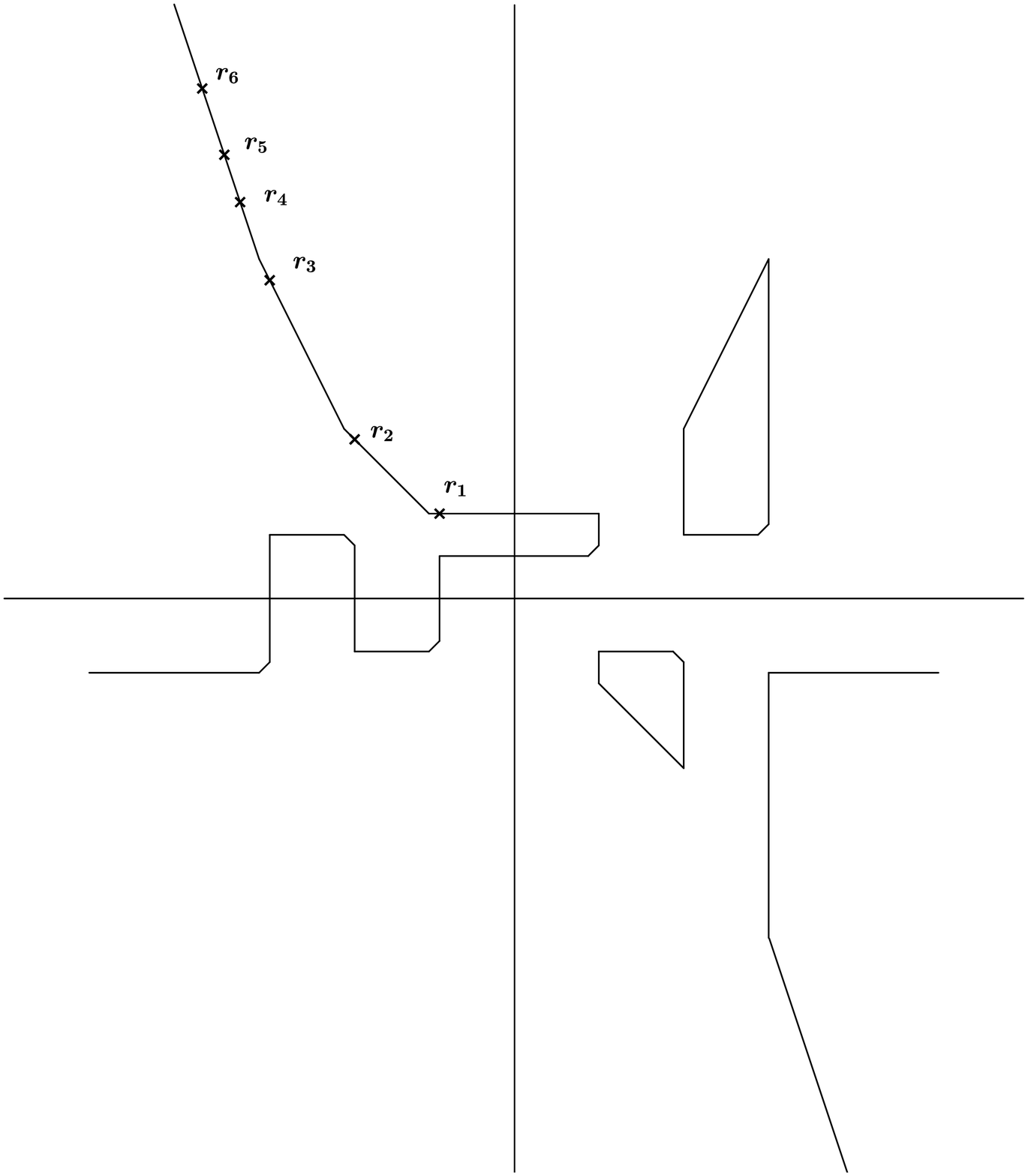,width=11cm}
  \caption{The real part $\R C_3$. \label{courbe8}}
 \end{minipage}
\end{figure}
\begin{lemma}
\label{lemmarealphases}
There exist a real phase $\varphi_{X_n}$ on $X_n$ and a real phase $\varphi_{Y_n}$ on $Y_n$ such that the following conditions are satisfied:
\begin{enumerate}
\item $\R D_n$ is the projection of the union of all vertical faces of $\R X_n$.\label{cond1}
\item $\pi^\R \left(\R X_n\cap \R Y_n\right)=\R C_n$.\label{cond2}
\item An edge $e$ of $\R X_n\cap \R Y_n$ is unbounded in direction $(0,0,-1)$ if and only if \linebreak $\pi^\R (e)\subset\left \lbrace r_1,\cdots ,r_{2n} \right\rbrace$.\label{cond3}
\end{enumerate}  
\end{lemma}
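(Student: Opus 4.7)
The plan is to use Lemma \ref{lemmarealphase} to construct $\varphi_{X_n}$ and $\varphi_{Y_n}$ from sign distributions $\delta_{X_n}$ and $\delta_{Y_n}$ at the vertices of the dual subdivisions of $\Delta(X_n)$ and $\Delta(Y_n)$. The dual subdivision of $\Delta(X_n)$ is obtained by coning over the subdivision $(\Delta_n^k)_{0\leq k\leq n-1}$ of $\Delta_n$ (sitting in the face $\{z=0\}$) from the apex vertex $(0,0,1)$; the dual subdivision of $\Delta(Y_n)$ contains the subdivision $(K_n^k,L_n^k,M_n^k)_{0\leq k\leq n-1}$ of $\Theta_n$ in its face $\{z=0\}$, together with the two additional vertices $(0,0,1)$ and $(1,0,1)$ at height one.

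On the vertices of $\Delta(X_n)$ and $\Delta(Y_n)$ lying in the face $\{z=0\}$, define $\delta_{X_n}$ and $\delta_{Y_n}$ to agree with the Harnack distributions $\delta_{D_n}$ and $\delta_{C_n}$, respectively. On the apex vertex $(0,0,1)\in\Delta(X_n)$ set $\delta_{X_n}((0,0,1))=+$; on the two apex vertices of $\Delta(Y_n)$ set $\delta_{Y_n}((0,0,1))=\delta_{Y_n}((1,0,1))=+$. By Lemma \ref{lemmarealphase} these sign distributions determine real phases $\varphi_{X_n}$ on $X_n$ and $\varphi_{Y_n}$ on $Y_n$.

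Now I would verify the three conditions. Condition \ref{cond1} is the most direct: each vertical facet $F$ of $X_n$ is dual to an edge $[v_1,v_2]$ of the subdivision of $\Delta_n$ lying in $\{z=0\}$. The four signs in $(\Z/2\Z)^3$ assigned to $F$ by $\varphi_{X_n}$, when projected onto the first two coordinates, yield precisely the two signs carried by the dual edge of $D_n$ under $\varphi_{D_n}$, each appearing twice; hence $\pi^\R(\R F)$ equals the corresponding real edge of $\R D_n$, and the union over all vertical facets gives $\R D_n$. For condition \ref{cond2}, I use Lemma \ref{uniquelift}: each edge $e$ of $C_n$ has a unique lift $\tilde{e}$ in $\tilde{C}_n=X_n\cap Y_n$. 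The real part of $\tilde{e}$ in $\R X_n\cap\R Y_n$ is controlled by $\delta_{X_n}$ and $\delta_{Y_n}$ at the vertices dual to the endpoints of $\tilde{e}$, and since these reduce modulo the third coordinate to $\delta_{C_n}$ on $\Theta_n$, the projection $\pi^\R(\R\tilde{e})$ reproduces the real structure of $e$ under $\varphi_{C_n}$.

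The hard part is condition \ref{cond3}: an infinite vertical edge of $\tilde{C}_n$ (there are $2n$ such, one over each marked point $x_i$) must have real part whose projection lies precisely in $\{r_1,\ldots,r_{2n}\}\subset\R_{-}^{*}\times\R_{+}^{*}$. Each such edge is adjacent to specific cells of $\Delta(X_n)$ and $\Delta(Y_n)$, and the signs on these cells, together with the compatibility condition of Definition \ref{compatibility}, determine the pairs $(\sigma_1,\sigma_2)\in(\Z/2\Z)^2$ for which the real part of the edge appears in the quadrant $s_{(\sigma_1,\sigma_2,\ast)}((\R_{+}^{*})^3)$. The main obstacle is to argue that with the apex sign choices above, the only surviving projected quadrant for each infinite vertical edge is $(1,0)$, matching the fact that every edge of $C_n$ containing a marked point carries the sign $(-,+)$ in $\varphi_{C_n}$. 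This requires a case analysis of the cells of $\Delta(Y_n)$ adjacent to each infinite vertical edge and of the sign changes induced by $\delta_{Y_n}$ on the $z=1$ vertices, which completes the proof of the lemma.
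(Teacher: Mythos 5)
Your construction of $\varphi_{X_n}$ (Harnack signs on the face $\{z=0\}$, an arbitrary sign at the apex $(0,0,1)$) is exactly right, and your verification of Condition \ref{cond1} is fine. The problems are with $Y_n$. First, a factual error: the face $\{z=0\}$ of the dual subdivision of $Y_n$ does not carry the subdivision $(K_n^k,L_n^k,M_n^k)$ --- that is the subdivision dual to $C_n$. Since $Y_n$ is the modification of $\R^2$ along $``\lambda_0 f_n/g"$, the $z=0$ part of its dual subdivision is the subdivision $(G_n^k,H_n^k,I_n^k)$ dual to $F_n$. So ``define $\delta_{Y_n}$ to agree with $\delta_{C_n}$'' does not directly parse, and in any case the signs of $\delta_{Y_n}$ on $\Theta_n$ govern the real structure of $F_n$, not of $C_n$.

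Second, and more seriously: Conditions \ref{cond2} and \ref{cond3} are statements about the intersection $\R X_n\cap\R Y_n$. For an edge $\tilde e$ of $\tilde C_n$ lying in a face $F$ of $X_n$ and a face $G$ of $Y_n$, the copies of $\tilde e$ appearing in $\R X_n\cap\R Y_n$ are indexed by the \emph{common} components of $\varphi_{X_n,F}$ and $\varphi_{Y_n,G}$, and these must project exactly onto the components of $\varphi_{C_n}$ on $\pi^\R(\tilde e)$. An a priori choice of $\delta_{Y_n}$ (Harnack on the base plus $+$ at the apexes) gives no control over this matching; generically the signs simply fail to agree and $\R X_n\cap\R Y_n$ either misses components of $\R C_n$ or acquires extra ones. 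This is why the paper's proof goes in the opposite direction: on each face $\tilde F^k$, $\tilde f_k$, $\tilde G^k$ of $Y_n$ meeting $\tilde C_n$ it \emph{defines} $\varphi_{Y_n}$ by taking a component $(\varepsilon_1,\varepsilon_2)$ of $\varphi_{C_n}$, lifting it through the non-vertical face of $X_n$ to the forced third sign $\varepsilon_3$, imposing Condition \ref{cond3} directly on the vertical faces $\tilde f_k$, propagating to the $\tilde G^k$ by the adjacency rule for real phases, and only afterwards extending to the remaining faces of $Y_n$ via a compatible sign distribution as in Lemma \ref{lemmarealphase}. You explicitly defer ``the main obstacle'' (the quadrant analysis for Condition \ref{cond3}) to an unperformed case analysis, and with your choice of $\delta_{Y_n}$ there is no reason it would come out right; this forced, face-by-face construction of $\varphi_{Y_n}$ is the essential content of the proof and is missing from your argument.
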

\begin{proof}
Denote by $\delta_{D_n}$ the Harnack distribution of signs at the vertices of the subdivision of $\Delta_n$ dual to $D_n$. By definition, $\varphi_{D_n}$ is compatible with $\delta_{D_n}$. Complete the distribution of signs $\delta_{D_n}$ to a distribution of signs $\delta_{X_n}$ at the vertices of the dual subdivision of $X_n$ by choosing an arbitrary sign for the vertex $(0,0,1)$. Define $\varphi_{X_n}$ to be the real phase on $X_n$ compatible with $\delta_{X_n}$. By construction, $\varphi_{X_n}$ satisfies Condition \ref{cond1} of Lemma \ref{lemmarealphases}. Define a real phase on $Y_n$ as follows. Denote by $\mathcal{G}$ the set of all faces of $Y_n$ containing an edge of $\tilde{C}_n$.
\\We first define a real phase $\varphi_{Y_n,F}$ on any face $F\in\mathcal{G}$.
Consider, as in the proof of Lemma \ref{uniquelift}, the subdivision of $\R^2$ induced by the tropical curve $F_n\cup E$.  Denote by $F^k$ the face of this subdivision dual to the point $(k,1)$, and by $G^{k}$ the face dual to the point $(k,2)$, for $0\leq k\leq n$. Denote by $\tilde{F}^k$ the face of $Y_n$ such that $\pi^\R(\tilde{F}^k)=F^k$ and by $\tilde{G}^k$ the face of $Y_n$ such that $\pi^\R(\tilde{G}^k)=G^k$, for $0\leq k\leq n$. Denote by $f_k$ the edge of $F_n$ containing $x_k$ and by $\tilde{f}_k$ the vertical face of $Y_n$ projecting to $f_k$, for $1\leq k\leq 2n$. One can see that all the edges of $\tilde{C}_n$ are contained in the union of all faces $\tilde{F}^k$, $\tilde{G}^k$ and $\tilde{f}_k$. For any $0\leq k\leq n$, one can see that the face $\tilde{F}^k$ contains an edge $e$ of $\tilde{C}_n$ such that $e$ is contained in a non-vertical face $F_e$ of $X_n$. Denote by $(\varepsilon_1,\varepsilon_2)$ a component of the real phase $\varphi_{C_n}$ on $\pi^\R(e)$. Since $F_e$ is non-vertical, there exists a unique sign $\varepsilon_3$ such that $(\varepsilon_1,\varepsilon_2,\varepsilon_3)$ is a component of the real phase $\varphi_{X_n}$ on $F_e$. Define the real phase $\varphi_{Y_n}$ on $\tilde{F}^k$ to contain $(\varepsilon_1,\varepsilon_2,\varepsilon_3)$ as a component. Condition \ref{cond3} of Lemma \ref{lemmarealphases} determines the real phase $\varphi_{Y_n}$ on $\tilde{f_k}$, for any $1\leq k\leq 2n$. Since the three faces $\tilde{F}^n$, $\tilde{G}^1$ and $\tilde{f}_{n+1}$ are adjacent, it follows from the definition of a real phase that the phase on $\tilde{G}^1$ is determined from the phase on $\tilde{F}^n$ and the phase on $\tilde{f}_{n+1}$. Since the three faces $\tilde{G}^{k+1}$, $\tilde{G}^k$ and $\tilde{f}_{n+k+1}$ are adjacent, for any $1\leq k\leq n-1$, it follows from the definition of a real phase that the phase on $\tilde{G}^{k+1}$ is determined from the phase on $\tilde{G}^k$ and the phase on $\tilde{f}_{n+k+1}$. By induction, it determines the real phase $\varphi_{Y_n}$ on $\tilde{G}^k$, for $1\leq k\leq n$. 
\\ We now extend the definition of $\varphi_{Y_n}$ to all faces of $Y_n$. Consider the set of edges $\mathcal{E}$ of the dual subdivision of $Y_n$ such that $e\in\mathcal{E}$ if and only if $e$ is dual to an element in $\mathcal{G}$. Consider $\mathcal{V}$ the set of vertices of edges in $\mathcal{E}$. As in Lemma \ref{lemmarealphase}, one can consider a distribution of signs on $\mathcal{V}$ compatible with the real phase on $\mathcal{G}$. Extend arbitrarily this distribution to all vertices of the dual subdivision of $Y_n$ and consider the real phase $\varphi_{Y_n}$ on $Y_n$ compatible with the extended distribution of signs. By construction, the real phases $\varphi_{X_n}$ and $\varphi_{Y_n}$ satisfy the Conditions \ref{cond1}, \ref{cond2} and \ref{cond3} of Lemma \ref{lemmarealphases}.  
\end{proof}
Put
$$
\R \tilde{C}_n=\R X_n\cap \R Y_n.
$$

Consider, as explained in Section \ref{strategy}, a family of real algebraic curves $\mathcal{D}_{n,t}$ approximating 
$(D_n,\varphi_{D_n})$, a family of real algebraic surfaces $\mathcal{X}_{n,t}$ approximating 
$(X_n,\varphi_{X_n})$ and a family of real algebraic surfaces $\mathcal{Y}_{n,t}$ approximating 
$(Y_n,\varphi_{Y_n})$. Put $\tilde{\mathcal{C}}_{n,t}=\mathcal{X}_{n,t}\cap \mathcal{Y}_{n,t}$. 

For every $t$, put $\mathcal{C}_{n,t}=\pi^\C\left(\tilde{\mathcal{C}}_{n,t}\right)$.
Consider $\overline{\R X_n}$ the partial compactification of $\R X_n$ in $(\R^*)^2\times \R$ and $\overline{\R \mathcal{X}_{n,t}}$ the partial compactification of $\R \mathcal{X}_{n,t}$ in $(\R^*)^2\times \R$, for any $t$.
 One has $\overline{\R X_n}\cap\left((\R^*)^2\times \lbrace 0\rbrace\right)=\R D_n$ and $\overline{\R  \mathcal{X}_{n,t}}\cap\left((\R^*)^2\times \lbrace 0\rbrace\right)=\R \mathcal{D}_{n,t}$.
Then, it follows from Theorem \ref{patchtropcomp} that for $t$ large enough, one has the following homeomorphism of pairs:
\begin{equation}
\left( \overline{\R\mathcal{X}_{n,t}},\R \mathcal{D}_{n,t}\cup\R\tilde{\mathcal{C}}_{n,t}\right)\simeq \left(\overline{\R X_n},\R D_n\cup\R\tilde{C}_{n}\right).
\label{homeo1}
\end{equation}

Moreover, the map $\pi^\R$ gives by restriction a bijection from $\overline{\R\mathcal{X}_{n,t}}$ to $(\R^*)^2$ fixing $\R \mathcal{D}_{n,t}$ and sending $\R\tilde{\mathcal{C}}_{n,t}$ to $\R\mathcal{C}_{n,t}$. The map $\pi^\R$ and the homeomorphism (\ref{homeo1}) give rise to the following homeomorphism of pairs, for $t$ large enough:
\begin{equation}
\left((\R^*)^2,\R \mathcal{D}_{n,t}\cup \R\mathcal{C}_{n,t}\right)\simeq
\left(\overline{\R X_n},\R D_n\cup\R\tilde{C}_{n}\right).
\end{equation}

It remains to describe the pair $\left(\overline{\R X_n},\R D_n\cup\R\tilde{C}_{n}\right)$. One has $\pi^\R(\overline{\R X_n})=(\R^*)^2$ and for any $x\in \R D_n$, $(\pi^\R)^{-1}(x)$ is an interval of the form $\lbrace x\rbrace\times \left[-h,h\right]$, see Figure \ref{surface1} for a local picture.
\begin{remark}
\begin{figure}
 \begin{minipage}[l]{.46\linewidth}
  \centering\epsfig{figure=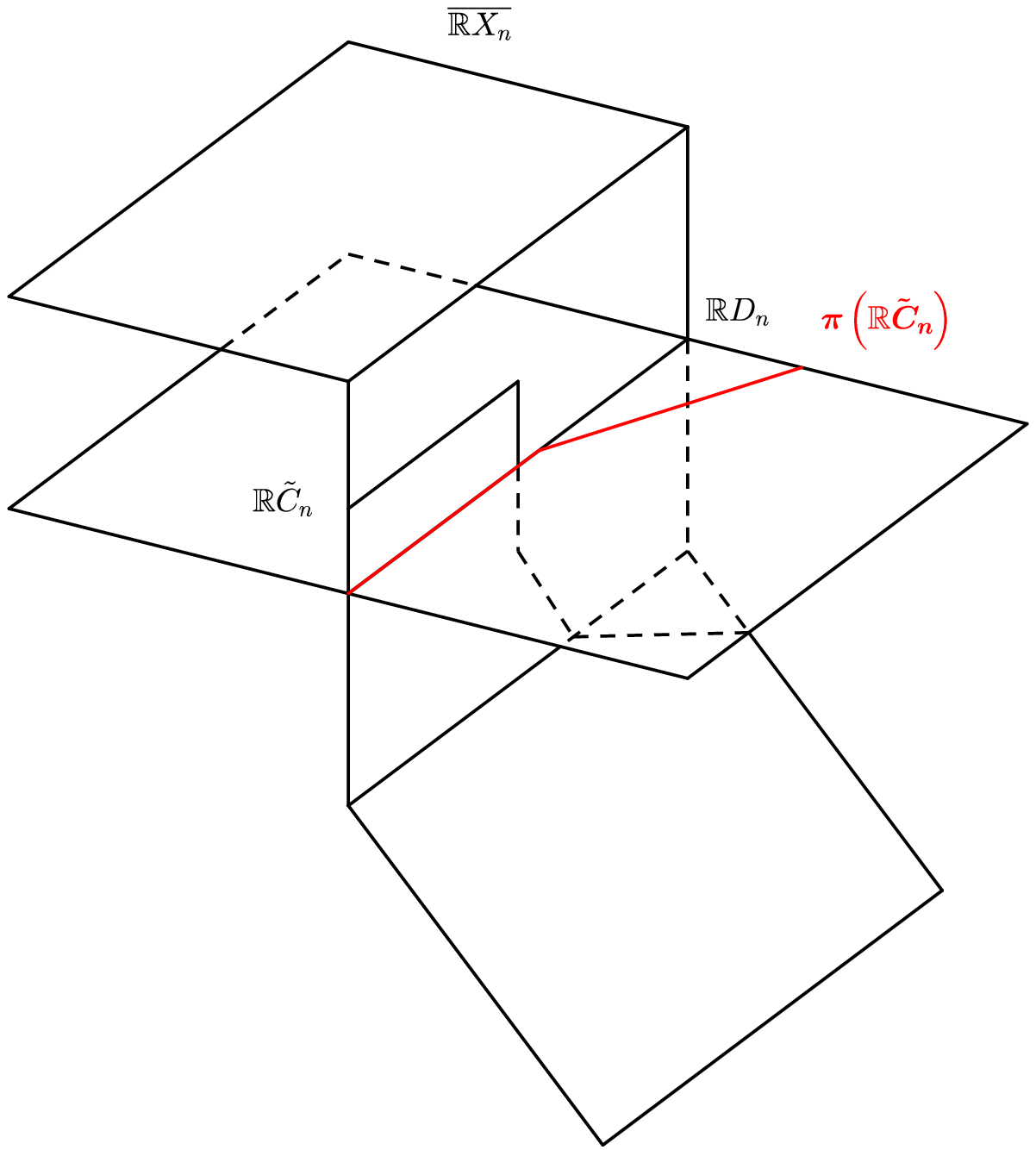,width=12cm}
  \caption{The curve $\R D_n\cup\R \tilde{C}_n$ in the surface $\overline{\R X_n}$ and the projection $\pi^\R(\R \tilde{C}_n)$. \label{surface1}}
 \end{minipage} \hfill
 \begin{minipage}[l]{.46\linewidth}
  \centering\epsfig{figure=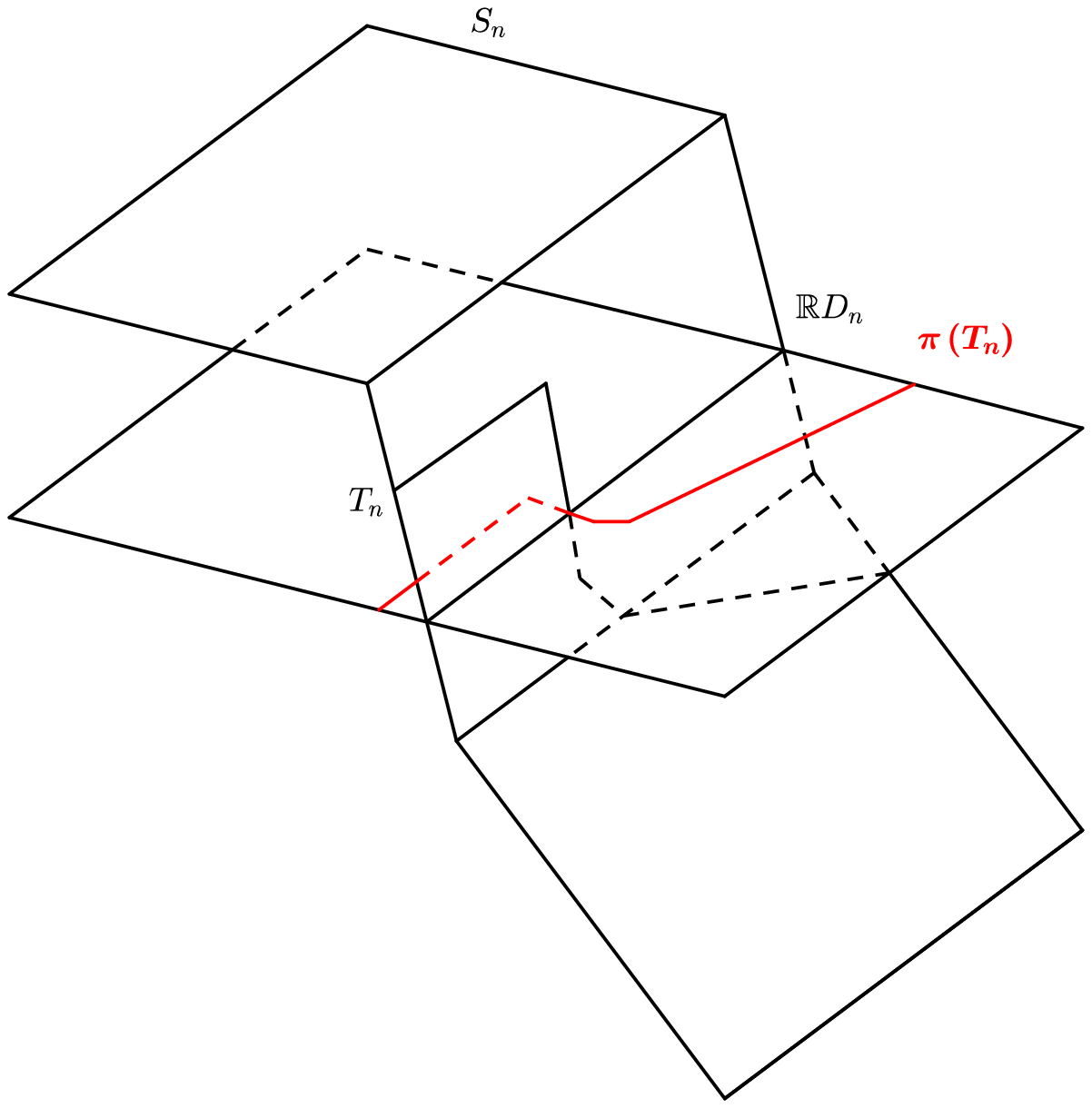,width=12cm}
  \caption{The curve $\R D_n\cup T_n$ in the surface $S_n$ and the projection $\pi^\R(T_n)$\label{surface2}}
 \end{minipage}
\end{figure}

The set $(\pi^\R)^{-1}(\R D_n)$ can be seen as a tubular neighborhood of $\R D_n$ in $\R X_n$. 
\end{remark}
Outside of $(\pi^\R)^{-1}(\R D_n)$, the map $\pi\vert_{\overline{\R X_n}}$ is bijective. Let $V$ be a small tubular neighborhood of $\R D_n$ in $(\R^*)^2$. Perturb slightly the pair $(\overline{\R X_n},\R\tilde{C}_n)$ inside $(\pi^\R)^{-1}(V)$ to produce a pair $(S_n,T_n)$, such that 
\begin{itemize}
\item $(\overline{\R X_n},\R\tilde{C}_n)\simeq (S_n,T_n)$,
\item $\R D_n\subset S_n$,
\item $\pi^\R$ defines an homeomorphism from $S_n$ to $(\R^*)^2$,
\end{itemize}
see Figure \ref{surface1} and Figure \ref{surface2} for a local picture. One obtains the following homeomorphism of pairs, for $t$ large enough:
\begin{equation}
\left((\R^*)^2,\R \mathcal{D}_{n,t}\cup \R\mathcal{C}_{n,t}\right)\simeq
\left((\R^*)^2,\R D_n\cup\pi^\R(T_n)\right).
\end{equation}
By construction, the curve $\pi^\R(T_n)$ is a small perturbation of $\R C_n$, intersecting the curve $\R D_n$ transversely and only at the marked points $r_1,\cdots, r_{2n}$, see Figure \ref{courbe10} and Figure \ref{courbe9} for the case $n=3$. Then, for $t$ large enough, the chart of the reducible curve $\mathcal{D}_{n,t}\cup\mathcal{C}_{n,t}$ is homeomorphic to the chart depicted in Figure \ref{courbebrugallered}.

\begin{figure}
 \begin{minipage}[l]{.46\linewidth}
  \centering\epsfig{figure=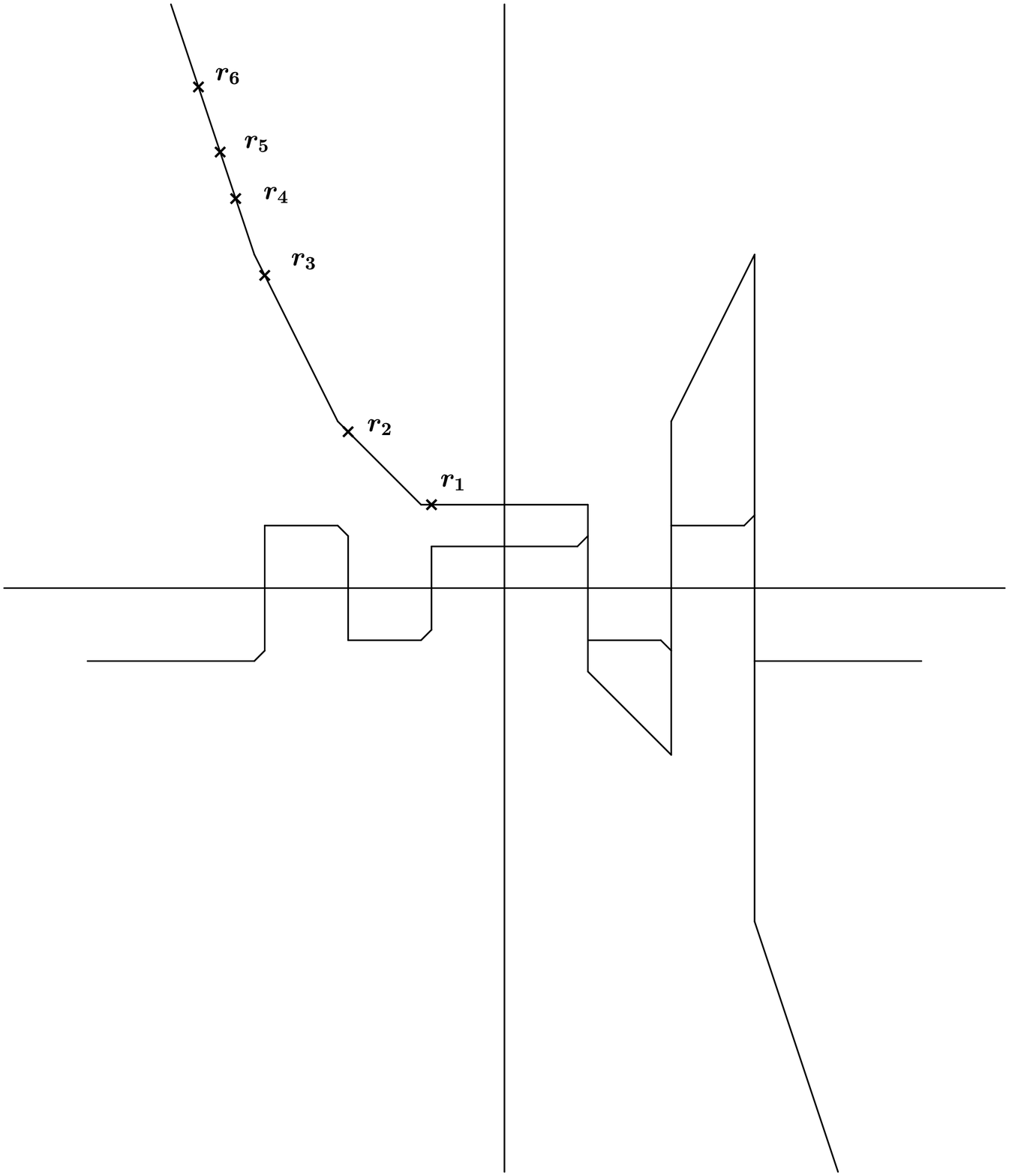,width=12cm}
  \caption{$\R C_3\cup \R D_3$. \label{courbe10}}
 \end{minipage} \hfill
 \begin{minipage}[l]{.46\linewidth}
  \centering\epsfig{figure=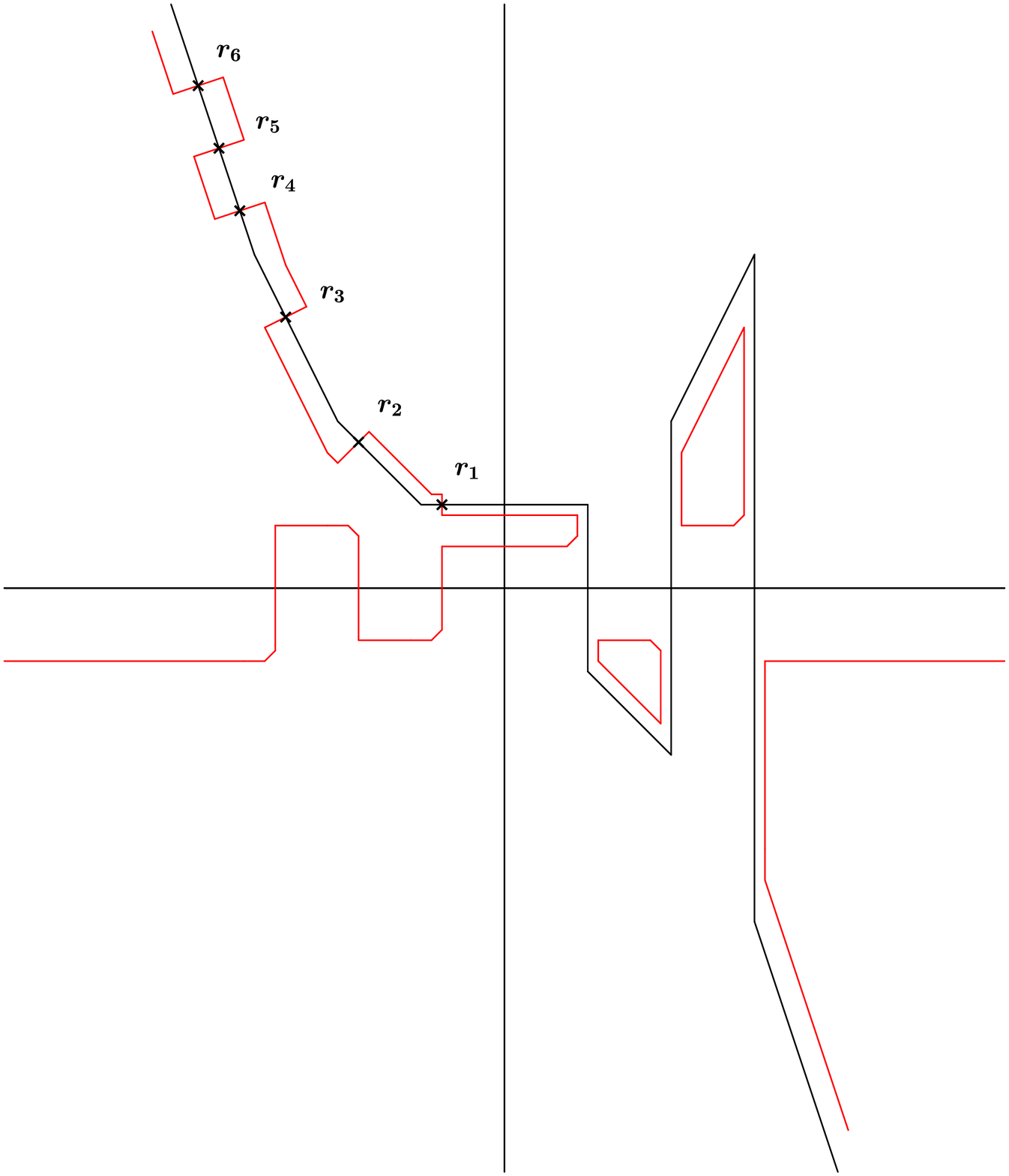,width=12cm}
  \caption{$\pi(T_3)\cup\R D_3$ \label{courbe9}}
 \end{minipage}
\end{figure}

\bibliographystyle{alpha}
\bibliography{biblio}

\def\cprime{$'$} \def\cprime{$'$} \def\cprime{$'$} \def\cprime{$'$}
  \def\cprime{$'$} \def\cprime{$'$} \def\cprime{$'$} \def\cprime{$'$}
\begin{thebibliography}{BLdM12}

\bibitem[BIMS]{BruItMiShaw}
E.~Brugall\'e, I.~Itenberg, G.~Mikhalkin, and K.~Shaw.
\newblock {Brief introduction to tropical geometry}.
\newblock {\em {To appear in the Proceedings of 21st G\"{o}kova
  Geometry-Topology Conference}}.

\bibitem[BLdM12]{Bru_Medr}
E.~A. Brugall{\'e} and L.~M. L{\'o}pez~de Medrano.
\newblock Inflection points of real and tropical plane curves.
\newblock {\em J. Singul.}, 4:74--103, 2012.

\bibitem[Bru06]{Brugalle2006}
E.~Brugall{\'e}.
\newblock Real plane algebraic curves with asymptotically maximal number of
  even ovals.
\newblock {\em Duke Math. J.}, 131(3):575--587, 2006.

\bibitem[GKZ08]{GKZ}
I.~M. Gelfand, M.~M. Kapranov, and A.~V. Zelevinsky.
\newblock {\em Discriminants, resultants and multidimensional determinants}.
\newblock Modern Birkh\"auser Classics. Birkh\"auser Boston, Inc., Boston, MA,
  2008.
\newblock Reprint of the 1994 edition.

\bibitem[Haa95]{Haas95}
B.~Haas.
\newblock Les multilucarnes: nouveaux contre-exemples \`a la conjecture de
  {R}agsdale.
\newblock {\em C. R. Acad. Sci. Paris S\'er. I Math.}, 320(12):1507--1512,
  1995.

\bibitem[Ite95]{It93}
I.~Itenberg.
\newblock Counter-examples to {R}agsdale conjecture and {$T$}-curves.
\newblock In {\em Real algebraic geometry and topology ({E}ast {L}ansing, {MI},
  1993)}, volume 182 of {\em Contemp. Math.}, pages 55--72. Amer. Math. Soc.,
  Providence, RI, 1995.

\bibitem[Ite01]{It01}
I.~Itenberg.
\newblock On the number of even ovals of a nonsingular curve of even degree in
  $\mathbb{CP}^2$.
\newblock In {\em Topology, ergodic theory, real algebraic geometry}, volume
  202 of {\em Amer. Math. Soc. Transl. Ser. 2}, pages 121--129. Amer. Math.
  Soc., Providence, RI, 2001.

\bibitem[Mik00]{Mik00}
G.~Mikhalkin.
\newblock Real algebraic curves, the moment map and amoebas.
\newblock {\em Ann. of Math. (2)}, 151(1):309--326, 2000.

\bibitem[Mik04]{Mikpants}
G.~Mikhalkin.
\newblock Decomposition into pairs-of-pants for complex algebraic
  hypersurfaces.
\newblock {\em Topology}, 43(5):1035--1065, 2004.

\bibitem[Mik06]{Mikhalkin}
G.~Mikhalkin.
\newblock Tropical geometry and its applications.
\newblock In {\em International {C}ongress of {M}athematicians. {V}ol. {II}},
  pages 827--852. Eur. Math. Soc., Z\"urich, 2006.

\bibitem[Ore03]{Or03}
S.~Yu. Orevkov.
\newblock Riemann existence theorem and construction of real algebraic curves.
\newblock {\em Ann. Fac. Sci. Toulouse Math. (6)}, 12(4):517--531, 2003.

\bibitem[{Rul}01]{Rul01}
H.~{Rullgard}.
\newblock {Polynomial amoebas and convexity.}
\newblock {\em {Preprint}}, 2001.

\bibitem[Sha11]{Shawthese}
K.~Shaw.
\newblock {\em Tropical Intersection Theory and Surfaces}.
\newblock PhD thesis, Universit\'e de Gen\`eve, 2011.

\bibitem[Stu94]{Sturmfels}
B.~Sturmfels.
\newblock Viro's theorem for complete intersections.
\newblock {\em Ann. Scuola Norm. Sup. Pisa Cl. Sci. (4)}, 21(3):377--386, 1994.

\end{thebibliography}
\end{document}